\renewcommand{\leq}{\leqslant}
\renewcommand{\geq}{\geqslant}
\numberwithin{equation}{section}
\newcommand{\Cc}{\mathbf{C}}
\newcommand{\Aa}{\mathbf{A}}
\newcommand{\Zz}{\mathbf{Z}}
\newcommand{\Pp}{\mathbf{P}}
\newcommand{\Gg}{\mathbf{G}}
\newcommand{\Qq}{\mathbf{Q}}
\newcommand{\Fp}{\mathbf{F}}
\newcommand{\expect}{\mathbf{E}}
\newcommand{\mods}[1]{\,(\mathrm{mod}\,{#1})}
\newcommand{\frtr}[3]{t_{{{#1},{#2}}}({#3})}
\newcommand{\frfn}[2]{t_{{{#1},{#2}}}}
\newcommand{\ra}{\rightarrow}
\newcommand{\lra}{\longrightarrow}
\newcommand{\injecte}{\hookrightarrow}
\DeclareMathOperator{\rank}{rank}
\DeclareMathOperator{\frob}{\mathrm{Fr}}
\DeclareMathOperator{\Ind}{Ind}
\DeclareMathOperator{\Tr}{Tr}
\DeclareMathOperator{\swan}{Swan}
\DeclareMathOperator{\ft}{FT}
\DeclareMathOperator{\cond}{c}
\DeclareMathOperator{\sing}{Sing}
\DeclareMathOperator{\dual}{D}
\renewcommand{\rho}{\varrho}
\DeclareMathOperator{\PGL}{PGL}
\newcommand{\demi}{{\textstyle{\frac{1}{2}}}}
\newcommand{\sheaf}[1]{\mathcal{{#1}}}
\DeclareMathSymbol{\gena}{\mathord}{letters}{"3C}
\DeclareMathSymbol{\genb}{\mathord}{letters}{"3E}
\theoremstyle{plain}
\newtheorem{theorem}{Theorem}[section]
\newtheorem{lemma}[theorem]{Lemma}
\newtheorem{corollary}[theorem]{Corollary}
\newtheorem{proposition}[theorem]{Proposition}
\theoremstyle{remark}
\theoremstyle{definition}
\newtheorem{definition}[theorem]{Definition}
\newtheorem{example}[theorem]{Example}
\newtheorem{remark}[theorem]{Remark}
\renewcommand{\geq}{\geqslant}
\renewcommand{\leq}{\leqslant}
\begin{document}

\title{An inverse theorem for Gowers norms of trace functions over
  $\Fp_p$}
 
\author{\'Etienne Fouvry}
\address{Universit\'e Paris Sud, Laboratoire de Math\'ematique\\
  Campus d'Orsay\\ 91405 Orsay Cedex\\France}
\email{etienne.fouvry@math.u-psud.fr} 

\author{Emmanuel  Kowalski}
\address{ETH Z\"urich -- D-MATH\\
  R\"amistrasse 101\\
  8092 Z\"urich\\
  Switzerland} 
\email{kowalski@math.ethz.ch}

\author{Philippe Michel}
\address{EPFL/SB/IMB/TAN, Station 8, CH-1015 Lausanne, Switzerland }
\email{philippe.michel@epfl.ch}

\date{\today,\ \thistime}

\subjclass[2010]{11B30,11T23}

\keywords{Gowers norms, inverse theorems, trace functions of
  $\ell$-adic sheaves, Riemann Hypothesis over finite fields}

\begin{abstract}
  We study the Gowers uniformity norms of functions over $\Zz/p\Zz$
  which are trace functions of $\ell$-adic sheaves. On the one hand, we
  establish a strong inverse theorem for these
  functions, and on the other hand this gives many explicit examples
  of functions with Gowers norms of size comparable to that of
  ``random'' functions.
\end{abstract}

\maketitle

\section{Introduction}

The \emph{Gowers uniformity norms} were introduced by Gowers in his
work on Szemer\'edi's theorem. As one sees from the definition
(see~\cite[Def. 11.2]{tv}), these norms (or a suitable power of them)
have very algebraic definitions when applied to functions defined over
a finite abelian group. In particular, one may consider a finite field
$k$, and attempt to understand the Gowers norm of functions of
algebraic nature on $k$. The most natural definition of such functions
seems to be the \emph{trace functions} of suitable sheaves, as we will
recall below. Indeed, in recent works~\cite{fkm, fkm-counting,
  fkm-primes}, we have shown that such functions (in the special case
$k=\Fp_p$) can be exploited powerfully in analytic arguments of
various types (amplification method for averages against Fourier
coefficients of modular forms, bilinear forms for averages over
primes, etc).
\par
In this note, we consider the Gowers norms of trace functions. Maybe
the most crucial issue in the study of these norms has been to
understand which bounded functions have ``large'' Gowers norm, a
suitable structural answer being known as an ``inverse theorem'' for
these norms. As it turns out, the rigidity of the structure of trace
functions, and especially Deligne's proof of the Riemann Hypothesis,
also leads to a rather precise structure theorem for Gowers norms of
trace functions over $\Fp_p$.
\par
Although this was not anticipated at first,\footnote{\ We thank
  B. Green for pointing out that this application of our result is of
  interest in combinatorics.} it also turns out that the estimates we
obtain give many simple explicit examples of functions with Gowers
norms of size comparable to that of ``random'' functions, in a precise
sense recalled below.  Since this fact may be of interest to people
interested in pseudorandomness measures of various sequences (see
also, among others, the papers~\cite{niederreite-rivat} of
Niederreiter and Rivat,~\cite{liu} of Liu and~\cite{fmrs} of Fouvry,
Michel, Rivat and S\'ark\"ozy), we first state a concrete result which
does not require any advanced algebraic-geometry language. In the
statement, $\|\cdot\|_{U_d}$ is the $d$-th Gowers uniformity norm
(normalized as in~\cite[Def. 11.2]{tv}), the definition of which is
recalled in Section~\ref{sec-prelims}.

\begin{theorem}\label{th-explicit}
For an odd prime $p$, and $x\in\Zz/p\Zz$, let
\begin{align}
  \varphi_1(x)&=\Bigl(\frac{f(x)}{p}\Bigr)\\
  \varphi_2(x)&=e\Bigl(\frac{\bar{x}}{p}\Bigr)\text{ if $x\not=0$ and
  }
  \varphi_2(0)=0,\\
  \varphi_3(x)&=\frac{S(x,1;p)}{\sqrt{p}} \text{ if $x\not=0$ and }
  \varphi_3(0)=-\frac{1}{\sqrt{p}},\\
  \varphi_4(x)&=\frac{1}{\sqrt{p}}\Bigl(p-|\{(u,v)\in\Fp_p^2\,\mid\,
  v^2=u(u-1)(u-x)\}|\Bigr)\text{ if $x\notin \{0,1\}$ and }\\
  \varphi_4(0)&=\varphi_4(1)=\frac{1}{\sqrt{p}},\nonumber
\end{align}
where $f\in\Zz[X]$ has degree $m\geq 1$ and is not proportional to the
square of another polynomial, $\bigl(\tfrac{\cdot}{p}\bigr)$ is the
Legendre symbol, and $S(a,b;c)$ is a classical Kloosterman sum. Then
for $d\geq 1$, we have
\begin{align*}
  \|\varphi_1\|_{U_d}^{2^d}&\leq (5m+10)^{(d+1)2^d}p^{-1},
  \\
  \|\varphi_2\|_{U_d}^{2^d}&\leq 15^{(d+1)2^d}p^{-1},
  \\
  \|\varphi_3\|_{U_d}^{2^d}&\leq 20^{(d+1)2^d}p^{-1},
  \\
  \|\varphi_4\|_{U_d}^{2^d}&\leq 25^{(d+1)2^d}p^{-1}.
\end{align*}
\end{theorem}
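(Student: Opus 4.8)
The plan is to deduce Theorem~\ref{th-explicit} from a general inverse theorem for the Gowers norms of trace functions, so the first task is to realize each $\varphi_i$, up to modification at $O(1)$ points and an irrelevant scalar of modulus $1$, as the trace function of an explicit geometrically irreducible middle-extension $\ell$-adic sheaf $\sheaf{F}_i$ on $\Aa^1_{\Fp_p}$, pointwise pure of weight $0$. Here $\sheaf{F}_1=f^\ast\sheaf{L}_{\chi_2}$ is the Kummer sheaf attached to the Legendre symbol $\chi_2$ and to $f$; $\sheaf{F}_2=[x\mapsto 1/x]^\ast\sheaf{L}_\psi$ is the Artin--Schreier sheaf of $\psi$ pulled back along inversion; $\sheaf{F}_3=\hk{2}$ is the Kloosterman sheaf of rank $2$; and $\sheaf{F}_4$ is the rank-$2$ sheaf obtained, after a Tate twist to weight $0$, from $R^1\pi_\ast\Qq_\ell$ for the Legendre family $E_x\colon v^2=u(u-1)(u-x)$. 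Since changing a function at $k$ points alters its $U_d$-norm by at most $O_d\bigl((k/p)^{(d+1)2^{-d}}\bigr)$, and multiplying by a scalar of modulus $1$ does not change it, the finitely many exceptional values in the definitions and the signs relating each $\varphi_i$ to $t_{\sheaf{F}_i}$ are harmless.

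The second step is to check the hypothesis under which the ``generic'' branch of the inverse theorem applies: that no $\sheaf{F}_i$ is geometrically isomorphic to an Artin--Schreier sheaf $\sheaf{L}_{\psi(P)}$ with $P$ a polynomial of degree $\leq d-1$, these being exactly the sheaves whose trace functions have $U_d$-norm equal to $1$ and which must therefore be excluded. For $\sheaf{F}_1$ this holds because $f^\ast\sheaf{L}_{\chi_2}$ is tamely ramified everywhere, in particular at $\infty$, and geometrically nontrivial --- which is precisely what the hypothesis that $f$ (read modulo $p$) is not proportional to the square of a polynomial provides --- whereas $\sheaf{L}_{\psi(P)}$ is wildly ramified at $\infty$ when $\deg P\geq 1$ and geometrically trivial when $\deg P=0$. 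For $\sheaf{F}_2$ it holds because $[x\mapsto 1/x]^\ast\sheaf{L}_\psi$ is lisse at $\infty$ but wildly ramified at $0$, which no $\sheaf{L}_{\psi(P)}$ is for nonconstant $P$. For $\sheaf{F}_3$ and $\sheaf{F}_4$ it is automatic, since they have rank $2$ while $\sheaf{L}_{\psi(P)}$ has rank $1$. Thus all four sheaves fall into the non-exceptional case, for every $d\geq 1$.

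The third step is bookkeeping: bound the conductors $\cond(\sheaf{F}_i)=\rank(\sheaf{F}_i)+\sum_{x\in\Pp^1(\Fbar_p)}\bigl(\operatorname{drop}_x(\sheaf{F}_i)+\swan_x(\sheaf{F}_i)\bigr)$. Now $\sheaf{F}_1$ has rank $1$, at most $m$ singular points in $\Aa^1$ (the zeros of $f$) and possibly one at $\infty$, all tame, so $\cond(\sheaf{F}_1)\leq m+2$; $\sheaf{F}_2$ has rank $1$ and is singular only at $0$, where $\swan_0=1$, so $\cond(\sheaf{F}_2)$ is absolutely bounded; $\sheaf{F}_3=\hk{2}$ is lisse on $\Gg_m$, tame unipotent at $0$, and totally wild at $\infty$ with $\swan_\infty=1$, so $\cond(\sheaf{F}_3)=O(1)$; and $\sheaf{F}_4$ is lisse on $\Pp^1\setminus\{0,1,\infty\}$ with tame, multiplicative-type local monodromy at each of $0$, $1$, $\infty$, so $\cond(\sheaf{F}_4)=O(1)$. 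Feeding these bounds into the explicit constant supplied by the general inverse theorem yields the four estimates, the numbers $5m+10$, $15$, $20$, $25$ being exactly what that (not claimed to be optimal) constant produces.

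The genuine difficulty is the inverse theorem itself, not this deduction. Its proof unfolds the $U_d$-norm in the standard way, writing $\|t_{\sheaf{F}}\|_{U_d}^{2^d}$ as the average over $\uple{h}=(h_1,\dots,h_{d-1})$ of $\bigl|\,\Nn^{-1}\sum_{x}t_{\sheaf{G}_{\uple{h}}}(x)\,\bigr|^2$, where $\sheaf{G}_{\uple{h}}=\bigotimes_{\omega\in\{0,1\}^{d-1}}[+\,\omega\cdot\uple{h}]^\ast\sheaf{F}^{(\omega)}$ is the $(d-1)$-fold multiplicative difference sheaf, with $\sheaf{F}^{(\omega)}$ equal to $\sheaf{F}$ or its conjugate-dual according to the parity of $|\omega|$; by Deligne's Riemann Hypothesis the inner sum is $O\bigl(\cond(\sheaf{G}_{\uple{h}})^2\sqrt p\bigr)$ unless $\sheaf{G}_{\uple{h}}$ has a geometrically trivial constituent. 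The crux --- and the step I expect to be the main obstacle --- is to prove that, unless $\sheaf{F}$ is of the excluded Artin--Schreier type, the set of $\uple{h}$ for which $\sheaf{G}_{\uple{h}}$ does have such a constituent lies in a proper subvariety of $\Aa^{d-1}$ of bounded degree; equivalently, that pairwise distinct additive translates of a geometrically irreducible sheaf cannot cancel against one another inside a tensor product except in the Artin--Schreier case. This rigidity, which is what forces both contributions to the average down to size $O(p^{-1})$, rests on the classification of local monodromy at the ramified points, a Goursat--Kolchin--Ribet analysis of the geometric monodromy of $\sheaf{F}$ and its translates, and the compatibility of tensor operations with translation.
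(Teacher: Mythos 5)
Your first three steps reproduce the paper's own deduction of Theorem~\ref{th-explicit}: the four functions are identified with trace functions of the Kummer sheaf $\sheaf{L}_{\chi_2(f)}$, the Artin--Schreier sheaf $\sheaf{L}_{\psi(1/x)}$, the Kloosterman sheaf, and the (Tate-twisted) Legendre-family sheaf, each geometrically irreducible; one checks none is geometrically an $\sheaf{L}_{\psi(P)}$ with $\deg P\leq d-1$ (tameness for the first and fourth, wild ramification at $0$ for the second, rank $2$ for the last two); and one feeds conductor bounds into the general structure theorem (Theorem~\ref{th-precise}/Corollary~\ref{cor-inverse}). Two small points: the paper's functions are \emph{exactly} the middle-extension trace functions (the stated values at the exceptional points are the middle-extension values), so no ``modify at $O(1)$ points'' triangle-inequality step is needed -- and that step would in any case blur the fully explicit constants $(5m+10)^{(d+1)2^d}$, $15^{(d+1)2^d}$, $20^{(d+1)2^d}$, $25^{(d+1)2^d}$, which require the conductors to be pinned down to $m+2$, $3$, $4$, $5$ rather than $O(1)$; and since the structural theorem carries the hypothesis $p>d$, the range $d\geq p$ must be disposed of separately via the trivial bound of Corollary~\ref{cor-trivial}, a point your ``for every $d\geq1$'' passes over.

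The genuine gap is in your last paragraph: the inverse theorem itself, which you correctly identify as the whole content, is not proved, and your sketch stalls exactly at its critical step. You expand the $U_d$-pnorm in one shot over $(d-1)$-tuples $\uple{h}$, apply the Riemann Hypothesis (Theorem~\ref{th-rh}) to each difference sheaf $\sheaf{G}_{\uple{h}}$, and then \emph{assert} that, outside the Artin--Schreier case, the exceptional tuples (those for which $\sheaf{G}_{\uple{h}}$ has a trivial geometric constituent) lie in a proper subvariety of $\Aa^{d-1}$ of bounded degree, to be proved by ``local monodromy classification and Goursat--Kolchin--Ribet''; you yourself flag this as the expected obstacle and give no argument, and it is not even clear a priori that this exceptional set is algebraic of complexity bounded uniformly in $p$. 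The paper avoids this multi-shift rigidity problem entirely by inducting on $d$ through the recursive definition (Lemma~\ref{lm-sheaf-rec}), so that only single shifts $h$ ever occur: the exceptional $h$ are those with a geometric embedding $\sheaf{L}_{\psi(P)}\injecte\xi_h(\sheaf{F}_i)$ with $\deg P\leq d-1$; Lemma~\ref{lm-isotypic} (the induced-versus-isotypic dichotomy for arithmetically irreducible pieces) upgrades this to $[+h]^*\sheaf{F}_i\simeq\sheaf{F}_i\otimes\sheaf{L}_{\psi(P_h)}$, so the exceptional set is an additive subgroup of $\Fp_p$, hence $\{0\}$ or all of $\Fp_p$; and in the latter case Lemma~\ref{lm-peel}, via pushforward along the degree-$p$ quotient $\Aa^1\to\Aa^1/\Fp_p h$ and Swan-conductor bookkeeping, forces either the excluded Artin--Schreier structure or $\swan_{\infty}(\sheaf{F}_i)\geq p$, contradicting bounded conductor (this is also where the hypotheses $k=\Fp_p$ and $d<p$ enter). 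Without either this inductive mechanism or an actual proof of your subvariety claim with uniform degree bounds, the key rigidity is assumed rather than established, so the proposal does not yet prove Theorem~\ref{th-explicit}.
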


\begin{remark}
  In~\cite[Ex. 11.1.17]{tv}, Tao and Vu note that if $\varphi$ is a
  \emph{random function}, in the sense that the values $\varphi(x)$,
  for $x\in\Fp_p$, are independent random variables (on some
  probability space) with $|\varphi(x)|\leq 1$ for all $x$ and with
  expectation zero, then we have
$$
\expect(\|\varphi\|_{U_d}^{2^d})\ll p^{-1},
$$
where the implied constant depends only on $d$. Thus this result gives
concrete examples of functions which are as uniform as random
functions (note that, by Weil's bound for Kloosterman sums and by
Hasse's bound for the number of points on elliptic curves over finite
fields, we have $|\varphi_3|$, $|\varphi_4|\leq 2$). We are not aware of previous
examples with this property in the literature (though the cases of
$\varphi_1$ and $\varphi_2$ are accessible to techniques based only on the Weil
bounds for character sums.)
\end{remark}

We will explain the proof of this result in
Section~\ref{sec-refined}. We now discuss the inverse theorems for
general trace functions. We first recall the general setup of these
functions (further examples are given in
Example~\ref{ex-simple-examples} and in Section~\ref{sec-examples}).
\par
We fix a prime $p$ and a finite field $k$ of characteristic $p$. Let
$\ell\not=p$ be a prime number. For any algebraic variety $X/k$, any
finite extension $k'/k$ and $x\in X(k')$, we denote by
$\frtr{\sheaf{F}}{k'}{x}$ the value at $x$ of the trace function of
some $\ell$-adic (constructible) sheaf $\sheaf{F}$ on $X/k$. We will
write $\frfn{\sheaf{F}}{k'}$ for the function $x\mapsto
\frtr{\sheaf{F}}{k'}{x}$ defined on $X(k')$.
\par
We will always assume that some isomorphism $\iota\,:\,
\bar{\Qq}_{\ell}\lra \Cc$ has been chosen and we will allow ourselves
to use it as an identification. Thus, for instance, by
$|\frtr{\sheaf{F}}{k}{x}|^2$, we will mean
$|\iota(\frtr{\sheaf{F}}{k}{x})|^2$.
\par
Given any finite field $k$ and any function $\varphi\,:\, k\lra
\bar{\Qq}_{\ell}$ or $\varphi\,:\, k\lra \Cc$, we denote
$$
U_d(\varphi)=\|\varphi\|_{U_d}^{2^d}
$$
where $\|\cdot\|_{U^d}$ is the $d$-th uniformity norm, and 
$$
U_d(\sheaf{F};k)=U_d(\frfn{\sheaf{F}}{k})
$$
(in fact, we will call $U_d(\varphi)$ the Gowers $d$-pnorm of
$\varphi$ -- the `p' is silent, as in ``ptarmigan'' or ``Psmith'' --
to avoid confusion.)
\par
We will work mostly with middle-extension sheaves, in the sense
of~\cite{katz-gkm}, i.e., constructible sheaves $\sheaf{F}$ on
$\Aa^1/k$ such that, for any open set $U$ on which $\sheaf{F}$ is
lisse, with open immersion $j\,:\, U\injecte \Aa^1$, we have
$$
\sheaf{F}\simeq j_*j^*\sheaf{F}.
$$
\par
Given any constructible sheaf $\sheaf{F}$, lisse on an open set
$U\subset \Aa^1$, with $j\,:\, U\injecte \Aa^1$ the open immersion,
the direct image $j_*\sheaf{F}$ is the unique middle-extension sheaf
on $\Aa^1$ which is isomorphic to $\sheaf{F}$ on $U$. In particular
$\sheaf{F}$ and $j_*\sheaf{F}$ have the same trace functions on $U$,
but those may differ at the singularities $\Aa^1-S$. Thus the
middle-extension condition can be seen as ensuring that a lisse sheaf
on an open set $U$ of $\Aa^1$ is extended ``optimally'' to all of
$\Aa^1$.
\par
As in~\cite[\S 7]{katz-esde}, a middle-extension sheaf as above is
called pointwise pure of weight $0$, if $j^*\sheaf{F}$ is pointwise
pure of weight $0$ on $U$, and it is called arithmetically irreducible
(resp. semisimple, resp. geometrically irreducible, geometrically
semisimple) if $j^*\sheaf{F}$ corresponds to an irreducible
(resp. semisimple) representation of the fundamental group
$\pi_1(U,\bar{\eta})$ (resp. of the geometric fundamental group
$\pi_1(U\times\bar{k},\bar{\eta})$), for some geometric generic point
$\bar{\eta}$ of $U$. By the \emph{semisimplification} of $\sheaf{F}$,
we mean the middle-extension sheaf
$$
j_*\sheaf{F}^{ss}
$$
where $\sheaf{F}^{ss}$ is the semisimplification of the restriction of
$\sheaf{F}$ to $U$. 
\par
Note that
$$
\frfn{\sheaf{F}}{k}=\frfn{\sheaf{F}^{ss}}{k},
$$
so that for any question involving the trace function of $\sheaf{F}$,
we may assume that the sheaf is arithmetically semisimple.
\par
We measure the complexity of a sheaf on $\Pp^1$ over a finite field by
its \emph{conductor}: if $\sheaf{F}$ is such a sheaf, of rank
$\rank(\sheaf{F})$ with singularities at $\sing(\sheaf{F})\subset
\Pp^1$, we define the \emph{(analytic) conductor} of $\sheaf{F}$ to be
\begin{equation}\label{eq-conductor}
  \cond(\sheaf{F})=\rank(\sheaf{F})+
  \sum_{x\in \sing(\sheaf{F})}{\max(1,\swan_x(\sheaf{F}))}.
\end{equation}
\par
An important subclass of sheaves is that of \emph{tamely ramified}
sheaves, which by definition are those where $\swan_x(\sheaf{F})=0$
for all $x$, so that only the rank and number of singularities appear
as measures of complexity.
\par
If $\sheaf{F}$ is a sheaf on $U\subset \Aa^1\subset \Pp^1$, the
conductor is defined as that of the direct image to $\Pp^1$ (i.e., the
Swan conductor at any point $x\in\Pp^1$ is that of the invariants
under inertia at $x$ of the fiber of $\sheaf{F}$ over a generic
geometric point.)

We now state a first version of our main structural result (see
Theorem~\ref{th-precise} for a more precise form from which it will be
deduced; for technical reasons, we are currently only able to treat
fully the case of prime fields $k=\Zz/p\Zz$, which is the most
directly relevant to analytic number theory.)

\begin{theorem}[Algebraic structure theorem for Gowers norms]
\label{th-inverse}
Let $p$ be a prime number and let $\ell\not=p$ be an auxiliary
prime. Let $d\geq 1$ be an integer such that $p>d$.
\par
Let $\sheaf{F}$ be a middle-extension $\ell$-adic sheaf on
$\Aa^1/\Fp_p$ which is pointwise pure of weight $0$ and arithmetically
semisimple.
% Assume that $\sheaf{F}$ does not geometrically contain a subsheaf
% which is lisse on $\Aa^1$ and with Swan conductor $\geq p$ at
% $\infty$, for instance assume that $\cond(\sheaf{F})<p$.
\par
Then we can write
$$
\frfn{\sheaf{F}}{\Fp_p}=t_1+t_2
$$
where $t_1$ and $t_2$ are themselves trace functions and:
\par
-- We have
\begin{equation}\label{eq-bound-inverse}
  U_d(t_1)\leq (5\cond(\sheaf{F}))^{(d+1)2^d}p^{-1};
\end{equation}
%%where $\alpha_d=(d+1)2^d$; 
\par
-- There exists some non-negative integer $j\leq \rank(\sheaf{F})\leq
\cond(\sheaf{F})$, polynomials $P_i\in \Fp_p[X]$ of degree at most
$d-1$ and coefficients $\beta_i$ bounded by $\rank(\sheaf{F})$ for
$1\leq i\leq j$, such that
$$
t_2(x)=\sum_{i=1}^{j}{\beta_i e\Bigl(\frac{P_i(x)}{p}\Bigr)}.
$$
% -- Or there exists an additive character $\psi$ of $k$, possibly
% trivial, and a polynomial $P\in k[X]$ of degree at most $d-1$ such
% that $\sheaf{F}$ geometrically contains the Artin-Schreier sheaf
% $\sheaf{L}_{\psi(P)}$. 
% \begin{equation}\label{eq-correlation}
%  \Bigl|\frac{1}{|k|} \sum_{x\in k}{
%     \frtr{\sheaf{F}}{k}{x}\psi(-P(x))
%   }
%   \Bigr|\geq 1-\frac{2d^2\cond(\sheaf{F})^2}{\sqrt{|k|}},
% \end{equation}
% \par
% -- Or we have
% \begin{equation}\label{eq-bound-inverse}
%   U_d(\sheaf{F};k)\leq (3\cond(\sheaf{F}))^{\alpha_d}|k|^{-1},
% \end{equation}
% where $\alpha_d=(d+1)2^d$. % and $\cond(\sheaf{F})$ is the conductor of
% % $\sheaf{F}$, defined below.
\end{theorem}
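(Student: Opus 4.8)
The plan is to analyze the Gowers $U_d$-norm through its standard expression as an average of products of additive shifts, and to unfold it one step at a time using the identity $\|\varphi\|_{U_d}^{2^d} = \expect_{h} \|\Delta_h\varphi\|_{U_{d-1}}^{2^{d-1}}$, where $\Delta_h\varphi(x) = \varphi(x+h)\overline{\varphi(x)}$. Iterating this $d$ times, $U_d(\frfn{\sheaf{F}}{\Fp_p})$ becomes an average over $(h_1,\dots,h_d)\in\Fp_p^d$ of sums over $x$ of a product of $2^d$ translates of $\frtr{\sheaf{F}}{\Fp_p}{\cdot}$ and its conjugate. The conjugate trace function of a pointwise pure weight-$0$ middle-extension sheaf is the trace function of the (arithmetic) dual $\dual(\sheaf{F})$, and each translate is the trace function of a pullback $[+h]^*\sheaf{F}$; so the inner sum over $x$ is, up to bounded error terms coming from singularities, the sum over $\Fp_p$ of the trace function of a tensor product sheaf $\sheaf{G}_{\uple{h}} = \bigotimes [+\sigma\cdot\uple{h}]^*\sheaf{F}^{(\pm)}$ of controlled conductor (bounded polynomially in $\cond(\sheaf{F})$, uniformly in $\uple{h}$, using the additivity of the conductor under tensor product and pullback by translation). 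By the Grothendieck--Lefschetz trace formula and Deligne's Riemann Hypothesis, this inner sum is $q\cdot(\text{multiplicity of the trivial representation in }\sheaf{G}_{\uple{h},\overline{\eta}} \text{ as a geometric } \pi_1\text{-representation}) + O(\cond(\sheaf{F})^{O(d)}\sqrt{p})$. Here I would use the explicit form already available (Theorem~\ref{th-precise}) to get the precise numerology.

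Next I would separate the $\uple{h}$ for which $\sheaf{G}_{\uple{h}}$ has \emph{no} geometrically trivial constituent — these contribute only the square-root-size error, which after averaging over $\uple{h}\in\Fp_p^d$ and taking the $2^d$-th root produces a bound of the shape $(5\cond(\sheaf{F}))^{(d+1)2^d}p^{-1}$ — from the ``resonant'' $\uple{h}$ where a trivial constituent appears. The key algebraic input is that the appearance of a geometrically trivial subsheaf in $\bigotimes_\sigma [+\sigma\cdot\uple{h}]^*\sheaf{F}^{(\pm)}$ is a rigid condition: it forces the geometrically irreducible constituents of $\sheaf{F}$ involved to be, after translation, geometrically isomorphic to each other up to duality, and — because $\sheaf{F}$ is pure of weight $0$ — Deligne's theory together with the analysis of the ``autocorrelation sheaf'' shows this can happen along at most a low-degree (degree $\le d-1$ in the relevant variables) family, and only when the constituent in question is geometrically a tensor of an Artin--Schreier sheaf $\sheaf{L}_{\psi(P(x))}$ with $\deg P\le d-1$ by a sheaf with finite geometric monodromy image that is forced to be trivial after the appropriate identification. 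This is what pins down $t_2$: collecting the resonant contributions, the part of $\frfn{\sheaf{F}}{\Fp_p}$ responsible for large Gowers norm is exactly a sum $\sum_i \beta_i e(P_i(x)/p)$ with $\deg P_i \le d-1$, the number $j$ of terms bounded by the number of geometrically irreducible constituents of $\sheaf{F}$ (hence by $\rank(\sheaf{F})$), and each $\beta_i$ bounded by $\rank(\sheaf{F})$ since it is a sum of at most $\rank(\sheaf{F})$ roots of unity (Frobenius eigenvalues on a one-dimensional space). Setting $t_1 = \frfn{\sheaf{F}}{\Fp_p} - t_2$, which is again a trace function (of the sheaf obtained by removing the relevant rank-one Artin--Schreier constituents from $\sheaf{F}^{ss}$, a construction that only lowers the conductor), one gets $U_d(t_1)$ small from the non-resonant estimate applied to this smaller sheaf.

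I would organize the write-up so that the purely analytic unfolding and the trace-formula bound are isolated as the ``easy'' half, and the classification of resonant $\uple{h}$ — i.e.\ the description of when a geometric isomorphism $[+h]^*\sheaf{G}\simeq \sheaf{G}$-type condition holds for a tensor configuration — is the technical heart, drawing on the middle-convolution / Tannakian circle of ideas and on Katz's results about sheaves with finite monodromy. The main obstacle is precisely this step: controlling \emph{uniformly in $\uple{h}$} the geometric decomposition of the big tensor sheaf $\sheaf{G}_{\uple{h}}$ and showing that the only way to produce a geometrically trivial constituent along a positive-dimensional family of $\uple{h}$ is through Artin--Schreier sheaves of polynomial phase of degree $\le d-1$; handling the tame and wild ramification bookkeeping for the conductor bound (so that the error term stays polynomial in $\cond(\sheaf{F})$ with the stated exponent $(d+1)2^d$) is the routine-but-delicate part that I would defer to the proof of Theorem~\ref{th-precise}.
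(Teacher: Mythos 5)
There is a genuine gap, and it sits in what you call the ``easy'' half. After unfolding all $d$ shifts at once, you bound the inner sum over $x$, for each fixed $\uple{h}$, by the one-variable Riemann Hypothesis, getting $O(\cond(\sheaf{F})^{O(1)}\sqrt{p})$ in the non-resonant case; but averaging this over the $p^d$ tuples $\uple{h}$ gives only $O(p^{-1/2})$ for the non-resonant contribution to $U_d$ (which is already the $2^d$-th power of the norm, so there is no further root to take), a full factor $\sqrt{p}$ short of the claimed $(5\cond(\sheaf{F}))^{(d+1)2^d}p^{-1}$, i.e.\ short of the random benchmark. The paper avoids the all-at-once expansion precisely for this reason: it proves Theorem~\ref{th-precise} by induction on $d$ using the one-step identity of Lemma~\ref{lm-sheaf-rec} (formula~\eqref{eq-induction}), with base case $U_1(\sheaf{F};k)=\bigl|\tfrac{1}{|k|}\sum_x \frtr{\sheaf{F}}{k}{x}\bigr|^2$, where it is the \emph{squaring} of a single square-root-cancelling sum that produces $p^{-1}$; at each later step the $h$-average is harmless because the inductive bound already carries a $p^{-1}$ for all but $O(\cond(\sheaf{F})^2)$ values of $h$. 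To run your direct $d$-fold expansion at the $p^{-1}$ level you would need joint cancellation in $(x,\uple{h})$, i.e.\ a higher-dimensional application of Deligne's theorem with control of all the cohomology of the tensor-product family, which you do not sketch. (Two smaller slips in the same half: conductors are not additive under tensor product — the rank multiplies, and Swan bounds need care, cf.\ Lemma~\ref{lm-cond-xih} which already requires the Esnault--Kerz input for the two-fold case — and the singularities of the $2^d$ translates collide for special $\uple{h}$, which needs an exceptional-set treatment like the set $E$ in the paper.)

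The second gap is the resonance classification. Your assertion that a geometrically trivial constituent of $\bigotimes_{\epsilon}[+\epsilon\cdot\uple{h}]^*\sheaf{F}^{(\pm)}$ along a large family of $\uple{h}$ forces Artin--Schreier constituents $\sheaf{L}_{\psi(P)}$ with $\deg P\leq d-1$ ``by finite geometric monodromy'' is not an argument, and it is not how the paper proceeds. The actual mechanism is: a single resonant $h$ gives, via Lemma~\ref{lm-isotypic}(2), a geometric isomorphism $[+h]^*\sheaf{F}_i\simeq\sheaf{F}_i\otimes\sheaf{L}_{\psi(P_h)}$; the set of such $h$ is an additive subgroup of $\Fp_p$, hence trivial or all of $\Fp_p$; and in the latter case Lemma~\ref{lm-peel}(2) (pushforward along the quotient by the order-$p$ translation group and invariance of Swan conductors for virtual degree-zero representations) forces either $\sheaf{F}_i\simeq\sheaf{L}_{\psi(Q)}$ or $\swan_\infty(\sheaf{F}_i)\geq p+\rank(\sheaf{F}_i)$. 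This is exactly where the hypotheses $d<p$ and $k=\Fp_p$ enter, and your sketch never engages them. Finally, note that once Theorem~\ref{th-precise} is granted (you invoke it anyway for ``numerology''), the statement you were asked to prove follows in a few lines with no Gowers expansion at all: by arithmetic semisimplicity write $\sheaf{F}=\sheaf{F}_1\oplus\sheaf{F}_2$ with $\sheaf{F}_2$ the sum of all components geometrically isomorphic to some $\sheaf{L}_{\psi(P)}$, $\deg P\leq d-1$, and set $t_i=\frfn{\sheaf{F}_i}{\Fp_p}$; the first alternative of Theorem~\ref{th-precise} cannot hold for $\sheaf{F}_1$, which gives~\eqref{eq-bound-inverse}, while $t_2$ has the stated shape with $|\beta_i|\leq\rank(\sheaf{F})$ by purity. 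Your detour through the full expansion is both unnecessary for this deduction and, as written, quantitatively insufficient.
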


% The assumption on $\sheaf{F}$ (that it does not contain geometrically
% a subsheaf lisse on $\Aa^1$ with Swan conductor $\geq p$) may be
% unnecessary. In any case, it is irrelevant in the ``horizontal''
% setting where $p$ is the main variable and we have sheaves
% $\sheaf{F}_p$ for every $p$ with conductor uniformly bounded as $p$
% varies (or even growing not too fast).  

Note that the condition $p>d$ is certainly not a problem in this
horizontal direction, where $p$ is the main variable and we think of
having sheaves $\sheaf{F}_p$ for every $p$ with conductor uniformly
bounded as $p$ varies (or even growing not too fast).

% \par
% In fact, for the prime fields $\Fp_p$, we obtain:

% \begin{corollary}[Inverse theorem for sheaves on $\Fp_p$]\label{cor-fp}
%   Let $p$ be a prime number, $\sheaf{F}$ a middle-extension
%   $\ell$-adic sheaf on $\Aa^1/\Fp_p$ which is arithmetically
%   semisimple, and let $d\geq 1$. Then:
% \par
% -- 
% % exists a polynomial $P\in \Fp_p[X]$ of degree at most
% % $d-1$ such that
% % \begin{equation}\label{eq-correlation-p}
% %   \Bigl|\frac{1}{p} \sum_{x\in \Fp_p}{
% %     \frtr{\sheaf{F}}{\Fp_p}{x}e\Bigl(\frac{-P(x)}{p}\Bigr)
% %   }
% %   \Bigr|\geq 1-\frac{2d^2\cond(\sheaf{F})^2}{\sqrt{p}},
% % \end{equation}
% \par
% -- Or we have
% \begin{equation}\label{eq-bound-inverse-p}
%   U_d(\sheaf{F};k)\leq (3\cond(\sheaf{F}))^{\alpha_d}p^{-1},
% \end{equation}
% where $\alpha_d=(d+1)2^d$.
% \end{corollary}

The more precise structural results will imply a particularly strong
inverse theorem if $\sheaf{F}$ is assumed to be geometrically
irreducible (and also imply Theorem~\ref{th-explicit}).

\begin{corollary}[Inverse theorem for irreducible
  sheaves]\label{cor-inverse}
  Let $p$, $\ell$ and $\sheaf{F}$ be as in the theorem, and assume
  that $\sheaf{F}$ is geometrically irreducible. For $d<p$, exactly
  one of the following two possibilities holds:
\par
% \par 
% -- Either $\sheaf{F}$ is lisse on $\Aa^1$ and satisfies
% $\swan_{\infty}(\sheaf{F})\geq p$;
% \par
  -- There exists $P\in \Fp_p[X]$ of degree $\leq d-1$ and a complex
  number $\alpha$ of modulus $1$ such that
\begin{equation}\label{eq-strong-correlation}
\frtr{\sheaf{F}}{\Fp_p}{x}=\alpha e\Bigl(\frac{P(x)}{p}\Bigr)
\end{equation}
for all $x\in \Fp_p$;
\par
-- Or we have
$$
U_d(\sheaf{F};\Fp_p)\leq (5\cond(\sheaf{F}))^{(d+1)2^d}p^{-1}.
$$
%%where $\alpha_d=(d+1)2^d$.
\end{corollary}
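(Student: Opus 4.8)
The plan is to deduce the corollary from the structure theorem, using its more precise form (Theorem~\ref{th-precise}) to identify the ``polynomial phase'' part $t_2$ of Theorem~\ref{th-inverse} with the trace function of the sum of those subsheaves of $\sheaf{F}$ that are geometrically isomorphic to an Artin--Schreier sheaf $\sheaf{L}_{\psi(P)}$ with $P\in\Fp_p[X]$ of degree $\leq d-1$ (here $\psi$ is the additive character of $\Fp_p$ corresponding under $\iota$ to $x\mapsto e(x/p)$). The key observation is that, since $\sheaf{F}$ is geometrically irreducible, it has a single geometric Jordan--H\"older constituent, namely itself; hence either $\sheaf{F}$ is geometrically isomorphic to one such $\sheaf{L}_{\psi(P)}$, or it contains no subsheaf of that type at all. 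I would split the proof into these two cases -- which are plainly mutually exclusive and exhaustive -- and match them to the two alternatives of the corollary. (The hypothesis $d<p$ is exactly what is needed to invoke Theorem~\ref{th-inverse}.)

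Suppose first that $\sheaf{F}$ is geometrically isomorphic to $\sheaf{L}_{\psi(P)}$ for some $P$ of degree $\leq d-1$. Since $\sheaf{L}_{\psi(P)}$ is lisse on all of $\Aa^1$, so is $\sheaf{F}$, and the rank-one sheaf $\sheaf{F}\otimes\sheaf{L}_{\psi(P)}^{-1}$ is geometrically trivial; it therefore comes from an unramified character of $\Gal(\Fbar_p/\Fp_p)$ on which geometric Frobenius acts by some scalar $\alpha\in\bar{\Qq}_\ell^{\times}$. Equivalently (this is the one-line Schur-lemma argument for geometrically constant twists of geometrically irreducible sheaves) one gets an arithmetic isomorphism $\sheaf{F}\simeq \sheaf{L}_{\psi(P)}\otimes\alpha^{\deg}$, where $\alpha^{\deg}$ denotes the rank-one sheaf with trace function $n\mapsto\alpha^{n}$ on $\Fp_{p^n}$. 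Pointwise purity of weight $0$ then forces $|\iota(\alpha)|=1$, and evaluating trace functions (using $\frtr{\sheaf{L}_{\psi(P)}}{\Fp_p}{x}=e(P(x)/p)$ and the absence of singularities) I would conclude $\frtr{\sheaf{F}}{\Fp_p}{x}=\alpha\,e(P(x)/p)$ for all $x\in\Fp_p$, which is~\eqref{eq-strong-correlation}.

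Suppose instead that $\sheaf{F}$ is \emph{not} geometrically isomorphic to any $\sheaf{L}_{\psi(P)}$ with $\deg P\leq d-1$; then by irreducibility it contains no subsheaf of that type, so the identification of $t_2$ recalled above gives $t_2=0$, i.e.\ $\frfn{\sheaf{F}}{\Fp_p}=t_1$. The estimate~\eqref{eq-bound-inverse} then reads
\[
U_d(\sheaf{F};\Fp_p)=U_d(t_1)\leq (5\cond(\sheaf{F}))^{(d+1)2^d}p^{-1},
\]
which is the second alternative of the corollary.

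Finally, to see that exactly one alternative holds I would note that the two are already exclusive as statements about $\sheaf{F}$ (it is, or it is not, geometrically an Artin--Schreier sheaf of a polynomial of degree $<d$), and also at the level of the estimates: in the first alternative $\|\frfn{\sheaf{F}}{\Fp_p}\|_{U_d}=\|e(P/p)\|_{U_d}=1$, since the $d$-fold iterated finite difference of a polynomial of degree $\leq d-1$ vanishes identically (and $|e(P/p)|=1$), so the bound of the second alternative -- which is nontrivial precisely when $p>(5\cond(\sheaf{F}))^{(d+1)2^d}$ -- cannot simultaneously hold. The real obstacle is not in the corollary but in Theorems~\ref{th-inverse} and~\ref{th-precise} themselves, namely the assertion that the \emph{only} obstruction to $U_d(\sheaf{F};\Fp_p)\ll_d p^{-1}$ is the presence of such an Artin--Schreier component; granting that input, the only delicate point in the deduction above is the passage from a geometric to an arithmetic isomorphism together with the purity normalisation of $\alpha$, and I expect no further difficulty.
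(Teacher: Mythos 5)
Your proposal is correct and follows essentially the same route as the paper: deduce the corollary from Theorem~\ref{th-precise}, noting that geometric irreducibility forces the first case of the dichotomy to be a geometric isomorphism $\sheaf{F}\simeq\sheaf{L}_{\psi(P)}$ with $\deg P\leq d-1$, which (after the Schur-lemma twist by $\alpha^{\deg}$ and purity giving $|\alpha|=1$) yields~\eqref{eq-strong-correlation}, while otherwise the bound~\eqref{eq-bound-inverse-repeat} applies directly. The paper leaves the geometric-to-arithmetic step and the mutual exclusivity implicit ("immediately implies"), so your spelling them out is merely additional detail, not a different argument.
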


\begin{remark}
 Geometric irreducibility, for sheaves $\sheaf{F}$ with small
  conductor, is equivalent with approximate $L^2$-normality of the
  trace function over $\Fp_p$, i.e., with
  the condition
$$
\frac{1}{p}\sum_{x\in\Fp_p}{
|\frtr{\sheaf{F}}{\Fp_p}{x}|^2
}\approx 1
$$
(see~\cite[Lemma 3.5]{fkm-counting} for a precise statement of
orthonormality for trace functions.)
\end{remark}

The case of tamely ramified sheaves is also simpler since
non-trivial Artin-Schreier sheaves are not tame. In fact, the
technical difficulty in extending Theorem~\ref{th-inverse} to all
finite fields, and the condition $d<p$, are then removed:

\begin{corollary}[Inverse theorem for tame sheaves]\label{cor-tame}
  Let $k$ be a finite field of characteristic $p$, let $\ell\not=p$ be
  given and let $\sheaf{F}$ be a tamely ramified $\ell$-adic
  middle-extension sheaf on $\Aa^1_k$ which does not geometrically
  contain the trivial sheaf. For $d\geq 1$, we have
$$
U_d(\sheaf{F};k)\leq (5\cond(\sheaf{F}))^{(d+1)2^d}|k|^{-1}.
$$
%%where $\alpha_d=(d+1)2^d$.
\end{corollary}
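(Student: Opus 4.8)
The plan is to rerun the argument proving Theorem~\ref{th-precise}: \emph{tameness} forces the ``resonant'' contribution to vanish identically, and along the way it removes both the restriction to prime fields and the hypothesis $p>d$. Write $q=|k|$ and $\varphi=\frfn{\sheaf{F}}{k}$. Replacing $\sheaf{F}$ by its semisimplification (which leaves $\varphi$ unchanged) and decomposing it into arithmetically irreducible summands $\sheaf{F}_1,\dots,\sheaf{F}_n$ with $n\leq\rank(\sheaf{F})$ --- each of which is tame and, since $\sheaf{F}$ does not geometrically contain the trivial sheaf, geometrically non-trivial --- the triangle inequality for $\|\cdot\|_{U_d}$ gives $U_d(\varphi)\leq n^{2^d}\max_i U_d(\frfn{\sheaf{F}_i}{k})$, at the cost of a factor $\cond(\sheaf{F})^{2^d}$; so we may assume $\sheaf{F}$ is arithmetically irreducible. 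Expanding $\|\varphi\|_{U_d}^{2^d}$ by iterated differencing but keeping the final square uncollapsed gives, for $d\geq1$,
$$
U_d(\sheaf{F};k)=\frac{1}{q^{d+1}}\sum_{\mathbf{h}\in k^{d-1}}|S(\mathbf{h})|^{2},\qquad
S(\mathbf{h})=\sum_{x\in k}\ \prod_{\omega\in\{0,1\}^{d-1}}\varphi^{(\omega)}(x+\omega\cdot\mathbf{h}),
$$
where $\varphi^{(\omega)}$ is $\varphi$ or $\overline{\varphi}$ according to the parity of $|\omega|$.

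Up to a correction $O_{\cond,d}(1)$ from the finitely many singularities, $S(\mathbf{h})$ is the sum over $\Aa^1(k)$ of the trace function of the middle extension $\sheaf{F}_{\mathbf{h}}$ of $\bigotimes_{\omega}[{+\,\omega\cdot\mathbf{h}}]^{*}(j^{*}\sheaf{F})^{(\omega)}$, where $(j^{*}\sheaf{F})^{(\omega)}$ is $j^{*}\sheaf{F}$ or its dual and $j$ is a dense open immersion on which $\sheaf{F}$ is lisse. This sheaf $\sheaf{F}_{\mathbf{h}}$ is again tame, of rank $\rank(\sheaf{F})^{2^{d-1}}$ and with at most $2^{d-1}|\sing(\sheaf{F})|$ singularities; being a middle extension it has $H^0_c(\Aa^1_{\bar k},\sheaf{F}_{\mathbf{h}})=0$, and it is mixed of weights $\leq0$. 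Hence, by Grothendieck--Lefschetz and Deligne's bound,
$$
|S(\mathbf{h})|\ \leq\ \dim H^2_c(\Aa^1_{\bar k},\sheaf{F}_{\mathbf{h}})\,q\ +\ \dim H^1_c(\Aa^1_{\bar k},\sheaf{F}_{\mathbf{h}})\,q^{1/2}\ +\ O_{\cond,d}(1),
$$
and by the Grothendieck--Ogg--Shafarevich formula (all Swan conductors vanish) together with the identification $H^2_c(\Aa^1_{\bar k},\sheaf{F}_{\mathbf{h}})\cong(\sheaf{F}_{\mathbf{h}})_{\pi_1^{\mathrm{geom}}}(-1)$, both cohomology dimensions are bounded in terms of $\cond(\sheaf{F})$ and $d$ only, uniformly in $\mathbf{h}$ and in $k$. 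Call $\mathbf{h}$ \emph{resonant} if $H^2_c(\Aa^1_{\bar k},\sheaf{F}_{\mathbf{h}})\neq0$, that is, if $\sheaf{F}_{\mathbf{h}}$ contains the trivial sheaf geometrically. For non-resonant $\mathbf{h}$ one gets $|S(\mathbf{h})|\ll_{\cond,d}q^{1/2}$, so their total contribution to $U_d(\sheaf{F};k)$ is at most $q^{-(d+1)}\cdot q^{d-1}\cdot O_{\cond,d}(q)=O_{\cond,d}(q^{-1})$.

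It remains to bound the resonant contribution, and this is the step where tameness is essential and which I expect to be the main obstacle. The claim is that the resonant set is contained in a proper Zariski-closed subset of $\Aa^{d-1}$ of degree bounded in terms of $\cond(\sheaf{F})$ and $d$. Closedness follows from upper semicontinuity of $\dim H^2_c$ in the family $\{\sheaf{F}_{\mathbf{h}}\}$; properness follows from tameness and geometric non-triviality, via the same analysis of the relations among the twisted translates $[{+\,\omega\cdot\mathbf{h}}]^{*}(j^{*}\sheaf{F})^{(\omega)}$ as in the proof of Theorem~\ref{th-precise}: if every $\mathbf{h}$ were resonant, a specialization/generic-point argument would force $\sheaf{F}$ to be geometrically isomorphic to a translate of an Artin--Schreier sheaf $\mathcal{L}_{\psi(P)}$, which is wildly ramified at $\infty$ when $P$ is non-constant (contradicting tameness) and geometrically trivial when $P$ is constant (contradicting our hypothesis). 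In particular no condition on $\deg P$, hence no condition $p>d$, is ever needed --- there is simply no surviving Artin--Schreier part to analyze --- and the bounds are purely algebraic, hence uniform in $k$; for instance, for $d=2$ the resonant $h$ form the translation stabilizer of $\sheaf{F}$, a finite subgroup of the additive group whose order is bounded by $|\sing(\sheaf{F})|\leq\cond(\sheaf{F})$. Granting this, point-counting on a variety of dimension $\leq d-2$ and bounded degree gives at most $O_{\cond,d}(q^{d-2})$ resonant $\mathbf{h}$, each contributing $|S(\mathbf{h})|^{2}\ll_{\cond,d}q^{2}$, for a total $q^{-(d+1)}\cdot O_{\cond,d}(q^{d-2})\cdot O_{\cond,d}(q^{2})=O_{\cond,d}(q^{-1})$. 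Adding the two contributions yields $U_d(\sheaf{F};k)\ll_{\cond,d}q^{-1}$, and tracking the constants exactly as in Theorem~\ref{th-precise} --- the exponents of $\cond(\sheaf{F})$ being governed by the ranks $\rank(\sheaf{F})^{2^{d-1}}$ and the numbers of singularities of the $\sheaf{F}_{\mathbf{h}}$, all of size $\cond(\sheaf{F})^{O(2^d)}$, together with the factor $\cond(\sheaf{F})^{2^d}$ from the reduction --- produces the asserted bound $(5\cond(\sheaf{F}))^{(d+1)2^d}|k|^{-1}$.
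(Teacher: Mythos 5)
Your strategy is genuinely different from the paper's: you expand $U_d$ directly into complete one-variable sums $S(\mathbf{h})$ attached to the $(d-1)$-parameter family of tensor products of shifted copies of $\sheaf{F}$, isolate a ``resonant'' locus, and apply the one-variable Riemann Hypothesis off that locus. The paper instead just reruns its induction on $d$ behind Theorem~\ref{th-precise} (the recursion of Lemma~\ref{lm-sheaf-rec} together with Lemmas~\ref{lm-isotypic}, \ref{lm-peel} and \ref{lm-cond-xih}), observing that for tame $\sheaf{F}$ any geometric embedding $\sheaf{L}_{\psi(P)}\injecte\xi_h(\sheaf{F}_i)$ forces $\psi(P)$ to be trivial (a non-constant $P$ is wildly ramified at $\infty$), and that Lemma~\ref{lm-peel}(2) -- the only place where $k=\Fp_p$ and $p>d$ are used -- is vacuous in the tame case because there is no non-trivial tame sheaf lisse on $\Aa^1$; this is exactly how the hypotheses $p>d$ and $k=\Fp_p$ disappear.

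Your argument, however, has a genuine gap at its central step, the resonance count. You need not merely that the resonant set is contained in a proper Zariski-closed subset of $\Aa^{d-1}$, but that it lies in a closed subset of codimension at least one whose \emph{degree} is bounded in terms of $\cond(\sheaf{F})$ and $d$ alone, uniformly in $k$; without such a degree bound the estimate of at most a constant times $q^{d-2}$ resonant values of $\mathbf{h}$ -- and hence the final $q^{-1}$ bound, let alone the explicit constant $(5\cond(\sheaf{F}))^{(d+1)2^d}$ -- does not follow. The justification you sketch (``if every $\mathbf{h}$ were resonant, a specialization/generic-point argument would force $\sheaf{F}$ to be an Artin--Schreier sheaf, by the same analysis as in Theorem~\ref{th-precise}'') only addresses the dichotomy dense-versus-proper, not the degree bound, and it appeals to an analysis that the paper performs one shift variable at a time through the recursion $U_{d+1}$ in terms of $U_d(\xi_h(\sheaf{F}))$ -- precisely the induction your approach is trying to bypass, so it cannot be cited as a black box for the whole $(d-1)$-parameter family at once. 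To make your route rigorous you would have to construct the family $\{\sheaf{F}_{\mathbf{h}}\}$ over $\Aa^{d-1}$ and prove uniform constructibility and degree bounds for the jump locus of $H^2_c$ (a substantial piece of work not indicated in your sketch), or else fall back on the paper's inductive scheme, in which case the tameness observations recalled above already yield the corollary directly.
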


\begin{remark}
  Using the triangle inequality, these results of course extend to
  ``small'' linear combinations of trace functions, which include the
  (centered) characteristic functions of sets with algebraic structure
  (e.g., values of polynomials, or definable sets in the language of
  rings~\cite{cvm}). We study in~\cite{fkm-norms} some aspects of the
  norms on functions over finite fields which arise naturally from
  this point of view.
\end{remark}

A number of authors have proved inverse theorems for Gowers norms of
functions over finite fields, the most general version being the one
of Tao and Ziegler~\cite{tao-ziegler-2} (see also, without
exhaustivity, the paper~\cite{green-tao} of Green and Tao, and the
earlier paper~\cite{tao-ziegler} of Tao and Ziegler; note also that,
as shown in~\cite{green-tao}, and independently discovered by Lovett,
Meshulam and Samorodnitsky, there do exist counterexamples in large
characteristic to the most naive guess for an inverse theorem.) The
focus in these papers is different: they consider arbitrary functions
on $k^n$ as $n$ grows, and the finite field $k$ is fixed
(in~\cite{green-tao}, the functions involved are themselves
polynomials).
\par
The arguments in all these works are much more delicate than the ones
of the present paper, and this applies even more to the
article~\cite{green-tao-ziegler} of Green, Tao and Ziegler which
establishes an inverse theorem for the Gowers norms on (in effect)
$\Zz/N\Zz$, where the main variable is indeed $N\ra+\infty$. There
also, polynomial phases do not give the only obstruction to having
small Gowers norms, and more general objects related to nilmanifolds
are required.
\par
Our proof relies instead on the formalism of algebraic geometry and on
the Riemann Hypothesis over finite fields. Thus this note is another
illustration of the great power of Deligne's results, and of the
interest in dealing with trace functions of arbitrary sheaves as
objects of interest, and tools, in analytic number theory (a point of
view which is already apparent in~\cite{fkm,fkm-counting,fkm-primes}).

\begin{example}\label{ex-simple-examples}
  Possibly the simplest examples of trace functions modulo $p$ are
  given by
$$
\varphi(x)=e\Bigl(\frac{P(x)}{p}\Bigr)
$$
where $P\in\Zz[X]$ is a polynomial. In that case, the associated sheaf
is a so-called Artin-Schreier sheaf, denoted $\sheaf{L}_{\psi(P)}$ for
a suitable additive character $\psi$, and has rank $1$. It is
therefore geometrically irreducible. Its conductor is $1+\deg(P)$,
hence our theorem states that, for $p>\deg(P)$, we have
$$
\|\varphi\|_{U_d}\leq (5(1+\deg(P)))^{d+1}p^{-2^{-d}}
$$
for all $d\leq \deg(P)$, where the implied constant depends only on
$\deg(P)$ (on the other hand, it is easy to check that
$\|\varphi\|_{U_d}=1$ if $d>\deg(P)$).  The reader may check that in
this special case, our proof can be expressed using only Weil's theory
of character sums in one variable.
\par
In~\cite[Ex. 11.1.12]{tv}, Tao and Vu observe that one can prove
elementarily the  estimates %%CHECK!!!
$$
\|\varphi\|_{U_d}\leq \Bigl(\frac{d-1}{p}\Bigr)^{2^{-\deg(P)}}
$$
for $1\leq d\leq \deg(P)$, which is weaker in terms of $p$, except if
$d=\deg(P)$.
\end{example}

\subsection*{Acknowledgements}  

Thanks to R. Pink for help with questions concerning $\ell$-adic
cohomology, and to J. Wolf for interesting discussions concerning
Gowers norms. Thanks to F. Jouve for his remarks and comments
concerning the manuscript, and thanks also to B. Green for suggesting
the inclusion of a concrete statement like Theorem~\ref{th-explicit}.

\subsection*{Notation}

As usual, $|X|$ denotes the cardinality of a set, and we write
$e(z)=e^{2i\pi z}$ for any $z\in\Cc$.  We write $\Fp_p=\Zz/p\Zz$.
\par
By $f\ll g$ for $x\in X$, or $f=O(g)$ for $x\in X$, where $X$ is an
arbitrary set on which $f$ is defined, we mean synonymously that there
exists a constant $C\geq 0$ such that $|f(x)|\leq Cg(x)$ for all $x\in
X$. The ``implied constant'' refers to any value of $C$ for which this
holds. It may depend on the set $X$, which is usually specified
explicitly, or clearly determined by the context. We write $f(x)\asymp
g(x)$ to mean $f\ll g$ and $g\ll f$.
\par
For a constructible sheaf $\sheaf{F}$ on $\Aa^1/k$, and $h\in k$, we
write $[+h]^*\sheaf{F}$ for the pullback of $\sheaf{F}$ under the map
$x\mapsto x+h$. If $\sheaf{F}$ is a middle-extension sheaf on
$\Aa^1/k$, we also write $\dual(\sheaf{F})$ for the middle-extension
dual of $\sheaf{F}$, i.e., given a dense open set $j\,:\, U\injecte
\Aa^1$ where $\sheaf{F}$ is lisse, we have
$$
\dual(\sheaf{F})=j_*((j^*\sheaf{F})'),
$$
where the prime denotes the lisse sheaf of $U$ associated to the
contragredient of the representation of the fundamental group of $U$
which corresponds to $j^*\sheaf{F}$ (see~\cite[7.3.1]{katz-esde}). If
$\sheaf{F}$ is pointwise pure of weight $0$, it is known that
$$
\frtr{\dual(\sheaf{F})}{k'}{x}=\overline{\frtr{\sheaf{F}}{k'}{x}}
$$
for all finite extensions $k'/k$ and all $x\in k'$ (this property is
obvious for $x\in U(k')$ and the point is that this extends to the
singularities when the dual is suitably defined.)  Note for instance
that $\cond(\dual(\sheaf{F}))=\cond(\sheaf{F})$.

\section{Preliminaries}\label{sec-prelims}

We recall the inductive definition of the Gowers norms, as
in~\cite[Def. 11.2]{tv} (one can sheafify it for trace functions,
i.e., one can see the Gowers norm of $\frfn{\sheaf{F}}{k}$ as
essentially the sum over $x\in k$ of the trace function of a suitable
``Gowers sheaf'' $\mathcal{U}_d(\sheaf{F})$; it might in fact be
interesting to search for some kind of ``motivic'' inverse theorem for
these Gowers sheaves, but we will not pursue this point of view in
this paper.)

\begin{definition}[Gowers norms]
  Let $k$ be a finite field and $\varphi\,:\, k\lra \Cc$ an arbitrary
  function. The $U_d$-norms of $\varphi$ are defined inductively for
  $d\geq 1$ by
$$
\|\varphi\|_{U_1}^2=\frac{1}{|k|^2}\sum_{h\in k}{\sum_{x\in k}{
    \varphi(x+h)\overline{\varphi(x)} }},
$$
and
$$
\|\varphi\|_{U_{d+1}}^{2^{d+1}}=
\frac{1}{|k|}\sum_{h\in k}{
\|\xi_h(\varphi)\|_{U_d}^{2^d}}
$$
for $d\geq 1$, where
$$
\xi_h(\varphi)(x)=\varphi(x+h)\overline{\varphi(x)}.
$$
\end{definition}

We wish to apply this recursive definition to trace functions.  We
first observe the trivial bound: recalling that
$$
|\frtr{\sheaf{F}}{k}{x}|\leq \rank(\sheaf{F})
$$
for all $x\in k$ if $\sheaf{F}$ is a middle-extension sheaf on
$\Aa^1/k$ which is pointwise pure of weight $0$, we get:

\begin{corollary}\label{cor-trivial}
  Let $\sheaf{F}$ be pointwise pure of weight $0$ and rank $r\geq
  1$. Then for $d\geq 1$, we have
\begin{equation}\label{eq-trivial-pnorm}
0\leq U_d(\sheaf{F};k)\leq r^{2^d}\leq \cond(\sheaf{F})^{2^d}.
\end{equation}
\end{corollary}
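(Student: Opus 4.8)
The plan is a routine induction on $d$, using the recursive definition of the Gowers norms together with the pointwise bound $|\frtr{\sheaf{F}}{k}{x}|\leq r$ valid for all $x\in k$ (since $\sheaf{F}$ is a middle-extension sheaf pointwise pure of weight $0$, as recalled above). Write $\varphi=\frfn{\sheaf{F}}{k}$, so that $\|\varphi\|_\infty\leq r$. In fact I would prove the slightly stronger statement that $\|\psi\|_{U_d}^{2^d}\leq \|\psi\|_\infty^{2^d}$ for an \emph{arbitrary} function $\psi\,:\,k\to\Cc$, from which the asserted inequality follows by taking $\psi=\varphi$.

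For the non-negativity, I would first record that, after the change of variable $y=x+h$ in the inner sum,
$$
\|\varphi\|_{U_1}^2=\frac{1}{|k|^2}\sum_{h\in k}\sum_{x\in k}\varphi(x+h)\overline{\varphi(x)}=\frac{1}{|k|^2}\Bigl|\sum_{x\in k}\varphi(x)\Bigr|^2\geq 0,
$$
and then observe that the recursion $\|\varphi\|_{U_{d+1}}^{2^{d+1}}=|k|^{-1}\sum_{h\in k}\|\xi_h(\varphi)\|_{U_d}^{2^d}$ exhibits $U_{d+1}(\varphi)$ as an average of non-negative quantities. Hence $U_d(\sheaf{F};k)\geq 0$ for all $d\geq 1$ by induction.

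For the upper bound, the base case $d=1$ follows from the identity above and the trivial estimate $|\sum_x\psi(x)|\leq |k|\,\|\psi\|_\infty$. For the inductive step I would use that $\xi_h(\psi)(x)=\psi(x+h)\overline{\psi(x)}$ satisfies $\|\xi_h(\psi)\|_\infty\leq \|\psi\|_\infty^2$, so that, applying the induction hypothesis to each function $\xi_h(\psi)$,
$$
\|\psi\|_{U_{d+1}}^{2^{d+1}}=\frac{1}{|k|}\sum_{h\in k}\|\xi_h(\psi)\|_{U_d}^{2^d}\leq \frac{1}{|k|}\sum_{h\in k}\bigl(\|\psi\|_\infty^2\bigr)^{2^d}=\|\psi\|_\infty^{2^{d+1}}.
$$
Taking $\psi=\varphi$ gives $U_d(\sheaf{F};k)\leq r^{2^d}$. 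Finally, $r\leq \cond(\sheaf{F})$ is immediate from the definition~\eqref{eq-conductor}, since the Swan contributions appearing there are non-negative; as $r\geq 1$, this also yields $r^{2^d}\leq\cond(\sheaf{F})^{2^d}$.

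There is essentially no obstacle here: the one point to keep an eye on is that passing from $\psi$ to $\xi_h(\psi)$ squares the sup-norm bound, but this is precisely offset by the doubling of the exponent $2^d\mapsto 2^{d+1}$ in the recursion, so the estimate propagates without any loss.
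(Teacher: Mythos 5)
Your argument is correct and is exactly the fleshed-out version of what the paper leaves implicit: the paper states the corollary as an immediate consequence of the pointwise bound $|\frtr{\sheaf{F}}{k}{x}|\leq \rank(\sheaf{F})$, and your induction on the recursive definition (sup-norm squaring offset by the doubling of the exponent, plus non-negativity from the $d=1$ identity) is precisely how that trivial bound propagates. The final observation $r\leq\cond(\sheaf{F})$ from~\eqref{eq-conductor} is also as intended, so nothing is missing.
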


Now we get a sheaf-theoretic interpretation of the recursive
definition of Gowers norms. Given a middle-extension sheaf $\sheaf{F}$
and $h\in k$, the trace function of the constructible sheaf
$$
\xi_h(\sheaf{F})=[+h]^*\sheaf{F}\otimes\dual(\sheaf{F})
$$
coincides with
$$
\xi_h(\frfn{\sheaf{F}}{k}),
$$
``almost everywhere''. Precisely it does at any $x$ which is not a
singularity of either $[+h]^*\sheaf{F}$ or $\dual(\sheaf{F})$ (this is
clear if $x$ is a singularity for neither of these, and easy to see
when $x$ is a singularity of one of them only). In fact, denoting by
$S$ the set of singularities of $\sheaf{F}$ in $\Aa^1$, we have
$$
\frfn{\xi_h(\sheaf{F})}{k}=\xi_h(\frfn{\sheaf{F}}{k})
$$
as functions on $k$ provided 
$$
h\notin E=\{h\in k^{\times}\,\mid\, S\cap (S-h)\not=\emptyset\},
$$
and moreover $\xi_h(\sheaf{F})$ is a middle-extension sheaf on $\Aa^1$
for $h\notin E$. Note that
$$
|E|\leq |S|(|S|-1)\leq \cond(\sheaf{F})(\cond(\sheaf{F})-1)
$$
since any $h\in E$ is a difference of two (distinct) elements of $S$,
and hence we get:

\begin{lemma}\label{lm-sheaf-rec}
  Let $k$ be a finite field, and let $\sheaf{F}$ be an $\ell$-adic
  middle-extension sheaf on $\Aa^1/k$ which is pointwise pure of
  weight $0$. Denote
$$
\xi_h(\sheaf{F})=[+h]^*\sheaf{F}\otimes\dual(\sheaf{F})
$$
for given $\sheaf{F}$ and $h\in k$. Then $\xi_h(\sheaf{F})$ is a
constructible sheaf, which is tame if $\sheaf{F}$ is tame.  Further,
let
$$
E=\{h\in k^{\times}\,\mid\, S\cap (S-h)\not=\emptyset\},\quad\text{
  where }\quad S=\{\text{singularities of }\sheaf{F}\}.
$$
\par
Then each $\xi_h(\sheaf{F})$ with $h\notin E$ is a middle-extension
sheaf on $\Aa^1/k$, pointwise pure of weight $0$, and we have
\begin{equation}\label{eq-induction}
  U_{d+1}(\sheaf{F};k)=
  \frac{1}{|k|}
  \sum_{h\in k-E}{U_d(\xi_h(\sheaf{F});k)}+\theta 
  \frac{|E|\rank(\sheaf{F})^{2^{d+1}}}{|k|}
\end{equation}
for any $d\geq 1$, where $|\theta|\leq 1$.
\end{lemma}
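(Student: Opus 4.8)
\emph{Proof proposal.} The plan is to collect the facts already worked out in the discussion preceding the statement and then to unwind the recursive definition of the Gowers norm. None of the steps is hard; the one piece of genuine content is that a single exceptional set $E$ governs both the possible discrepancy between the sheaf-theoretic and analytic versions of $\xi_h$ and the failure of $\xi_h(\sheaf{F})$ to be a middle extension.

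First I would record the structural properties of $\xi_h(\sheaf{F})=[+h]^*\sheaf{F}\otimes\dual(\sheaf{F})$. It is constructible, being a tensor product of pullbacks of constructible sheaves. If $\sheaf{F}$ is tame then so is $[+h]^*\sheaf{F}$, since $x\mapsto x+h$ extends to an automorphism of $\Pp^1$ fixing $\infty$ and hence preserves the Swan conductor at every closed point; $\dual(\sheaf{F})$ is tame because the contragredient of a representation has the same Swan conductor as it; and every break of a tensor product is at most the larger of the two largest breaks of its factors, so $\xi_h(\sheaf{F})$ is tame. On the dense open set where both factors are lisse, Frobenius weights add and are unchanged by translation and by the middle-extension dual, so $\xi_h(\sheaf{F})$ is pointwise pure of weight $0$ there, which — given the convention adopted here for middle-extension sheaves — is all that ``pointwise pure of weight $0$'' demands.

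Next comes the middle-extension property. Write $U$ for a dense open set on which $\sheaf{F}$ is lisse and $S=\Aa^1\setminus U$. For $h\in k^{\times}\setminus E$ the finite sets $S$ and $S-h$ are disjoint, so at each point of $\Aa^1$ one of the two factors $[+h]^*\sheaf{F}$ (lisse off $S-h$, and itself a middle extension, being $[+h]^*$ of one) and $\dual(\sheaf{F})$ (lisse off $S$, a middle extension by definition) is lisse; since being a middle extension is a local condition on $\Aa^1$, it suffices to observe that for $\sheaf{G}$ a middle extension on a neighbourhood $V$ of a point $x$ and $\sheaf{L}$ lisse on $V$, the projection formula for the open immersion $j\colon V\setminus\{x\}\injecte V$ gives $j_*j^*(\sheaf{G}\otimes\sheaf{L})\simeq(j_*j^*\sheaf{G})\otimes\sheaf{L}\simeq\sheaf{G}\otimes\sheaf{L}$ on $V$. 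Hence, for such $h$, $\xi_h(\sheaf{F})$ is a middle-extension sheaf, pointwise pure of weight $0$ by the previous paragraph, and its trace function is $\xi_h(\varphi)$ on $k$, where $\varphi=\frfn{\sheaf{F}}{k}$ (at a point where one factor is lisse this follows from the product formula for tensor products together with $\frtr{\dual(\sheaf{F})}{k}{x}=\overline{\varphi(x)}$ for all $x$ and $\frtr{[+h]^*\sheaf{F}}{k}{x}=\varphi(x+h)$); the same trace identity holds trivially for $h=0$. In either case $U_d(\xi_h(\varphi))=U_d(\xi_h(\sheaf{F});k)$ for $h\in k\setminus E$.

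Finally I would unwind the recursion: by the inductive definition of the Gowers norm,
$$
U_{d+1}(\sheaf{F};k)=\frac{1}{|k|}\sum_{h\in k}\|\xi_h(\varphi)\|_{U_d}^{2^d}=\frac{1}{|k|}\sum_{h\in k}U_d(\xi_h(\varphi)),
$$
and I split the sum according to whether $h\in E$. For $h\in k\setminus E$ the previous paragraph turns $U_d(\xi_h(\varphi))$ into $U_d(\xi_h(\sheaf{F});k)$. For $h\in E$, the bound $|\varphi|\leq\rank(\sheaf{F})$ on $k$ together with the trivial estimate $0\leq U_d(\psi)\leq\|\psi\|_{\infty}^{2^d}$ — immediate by induction from the definition — gives $0\leq U_d(\xi_h(\varphi))\leq\rank(\sheaf{F})^{2^{d+1}}$, so the total $h\in E$ contribution is $\theta\,|E|\,\rank(\sheaf{F})^{2^{d+1}}/|k|$ for some $\theta$ with $0\leq\theta\leq1$; this is \eqref{eq-induction}, and $|E|\leq|S|(|S|-1)\leq\cond(\sheaf{F})(\cond(\sheaf{F})-1)$ was noted before the statement. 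The only genuinely non-formal step is the middle-extension discussion of the third paragraph; everything else is bookkeeping around the recursion plus a crude estimate over the exceptional set $E$.
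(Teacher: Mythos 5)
Your proposal is correct and follows essentially the same route as the paper: the paper's own proof simply invokes the discussion preceding the lemma (trace-function coincidence and the middle-extension property of $\xi_h(\sheaf{F})$ for $h\notin E$, plus $|E|\leq\cond(\sheaf{F})(\cond(\sheaf{F})-1)$) and then bounds the exceptional terms by the trivial estimate $U_d(\xi_h(\varphi))\leq\rank(\sheaf{F})^{2^{d+1}}$, exactly as you do. Your write-up merely fills in the details the paper leaves informal (the projection-formula argument for the middle-extension property, tameness and purity of the tensor product), and your care in treating $h=0$ only via the trace identity is consistent with how the lemma is actually used later.
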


\begin{proof}
  Since it is clear that $\xi_h(\sheaf{F})$ is indeed tame if
  $\sheaf{F}$ is, the only thing that remains is to justify the error
  term in~(\ref{eq-induction}). But if $\varphi=\frfn{\sheaf{F}}{k}$,
  we have a trivial bound
$$
\frac{1}{|k|}U_d(\xi_h(\varphi))\leq
\rank(\sheaf{F})^{2\cdot 2^d}|k|^{-1}
$$
for any $h\in k$ (similar to the previous corollary), hence the
result.
\end{proof}

We now state the essential result that allows us to get optimal
bounds, which is a general version of the Riemann Hypothesis, for sums
in one variable. The version we use is as follows:

\begin{theorem}[Deligne]\label{th-rh}
  Let $p$ be a prime number and $\sheaf{F}$ an $\ell$-adic
  middle-extension sheaf on $\Aa^1/k$, pointwise pure of weight $0$,
  such that $H^2_c(\Aa^1\times\bar{k},\sheaf{F})=0$. Then we have
$$
\Bigl|
\sum_{x\in k}{\frtr{\sheaf{F}}{k}{x}}
\Bigr|\leq 2\cond(\sheaf{F})^2\sqrt{|k|}.
$$
\end{theorem}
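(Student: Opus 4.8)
The strategy is the standard one: pass to the Grothendieck--Lefschetz trace formula for the complete curve and control each cohomology group via Deligne's weight bounds, then bound the dimensions of the cohomology groups in terms of the conductor. First I would replace $\sheaf{F}$, a middle-extension sheaf on $\Aa^1/k$, by its direct image $\bar{j}_*\sheaf{F}$ to $\Pp^1/k$, which has the same trace function on $\Aa^1(k)$ up to a controlled error (the contribution of the point at infinity is $O(\rank(\sheaf{F}))=O(\cond(\sheaf{F}))$, absorbed into the final bound). Then $\sum_{x\in k}\frtr{\sheaf{F}}{k}{x}$ differs by $O(\cond(\sheaf{F}))$ from $\sum_{x\in\Pp^1(k)}\frtr{\bar j_*\sheaf{F}}{k}{x}$, to which the Lefschetz formula applies:
\[
\sum_{x\in\Pp^1(k)}\frtr{\bar j_*\sheaf{F}}{k}{x}
=\sum_{i=0}^{2}(-1)^i\Tr(\frob_k\mid H^i(\Pp^1\times\bar k,\bar j_*\sheaf{F})).
\]

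Next I would dispose of the extreme cohomology groups. Since $\sheaf{F}$ is a middle extension and we may assume (as noted in the excerpt) it is arithmetically semisimple with no geometrically trivial subsheaf after discarding the contribution of such — actually the cleanest route is: $H^0(\Pp^1\times\bar k,\bar j_*\sheaf{F})$ is the space of global sections, which injects into the generic stalk and is zero unless $\sheaf{F}$ has a trivial geometric constituent; and $H^2(\Pp^1\times\bar k,\bar j_*\sheaf{F})\cong H^2_c(\Aa^1\times\bar k,\sheaf{F})^{?}$ — here I would invoke the hypothesis $H^2_c(\Aa^1\times\bar k,\sheaf{F})=0$ together with the fact that for a middle extension $H^2$ of the compactification coincides with (or is a quotient of) $H^2_c$ of the open part, hence also vanishes. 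So only $H^1$ survives, and by Deligne (Weil II), since $\bar j_*\sheaf{F}$ is pure of weight $0$ on the smooth complete curve $\Pp^1$, the middle cohomology $H^1(\Pp^1\times\bar k,\bar j_*\sheaf{F})$ is pure of weight $1$, giving eigenvalues of $\frob_k$ of absolute value exactly $\sqrt{|k|}$. Therefore
\[
\Bigl|\sum_{x\in k}\frtr{\sheaf{F}}{k}{x}\Bigr|
\leq \dim H^1(\Pp^1\times\bar k,\bar j_*\sheaf{F})\cdot\sqrt{|k|}+O(\cond(\sheaf{F})).
\]

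The remaining task — and the main obstacle — is the purely algebraic estimate $\dim H^1(\Pp^1\times\bar k,\bar j_*\sheaf{F})\ll\cond(\sheaf{F})^2$, with an absolute constant, and then checking that the error terms collapse everything into the clean bound $2\cond(\sheaf{F})^2\sqrt{|k|}$. For this I would use the Euler--Poincar\'e (Grothendieck--Ogg--Shafarevich) formula: for the middle extension $\mathcal{G}=\bar j_*\sheaf{F}$ on $\Pp^1$, one has $\chi(\Pp^1\times\bar k,\mathcal{G})=2\rank(\mathcal{G})-\sum_{x\in\sing}\bigl(\rank(\mathcal{G})-\dim\mathcal{G}^{I_x}+\swan_x(\mathcal{G})\bigr)$, where the drop $\rank-\dim\mathcal{G}^{I_x}$ is bounded by $\rank(\mathcal{G})$ and the Swan terms are bounded by the Swan part of the conductor. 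Combining $h^1=-\chi+h^0+h^2\leq-\chi+h^0$ with $h^0\leq\rank(\mathcal{G})$ and the crude bounds $\rank(\mathcal{G})\leq\cond(\sheaf{F})$, $|\sing|\leq\cond(\sheaf{F})$, and $\sum_x\max(1,\swan_x)\leq\cond(\sheaf{F})$, one gets $h^1\leq\rank+\sum_{x\in\sing}(\rank+\swan_x)\leq\cond(\sheaf{F})+|\sing|\cdot\rank+\sum_x\swan_x\leq\cond(\sheaf{F})^2+\cdot$, and after absorbing the $O(\cond(\sheaf{F}))$ from the point at infinity and from $h^0$ into the quadratic term one arrives at the stated $2\cond(\sheaf{F})^2\sqrt{|k|}$. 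The delicate point is bookkeeping the contribution at $\infty$ (where $\mathcal{G}^{I_\infty}$ rather than the full stalk enters, per the convention in the excerpt) and verifying that the numerical constant $2$ genuinely suffices once all $O(\cond)$ terms, which are $\leq\cond^2$ since $\cond\geq 1$, are folded in; this is routine but must be done carefully rather than hand-waved.
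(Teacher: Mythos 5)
Your outline uses the same three ingredients as the paper (Grothendieck--Lefschetz, Deligne's weight bounds, Grothendieck--Ogg--Shafarevich), but routed through the compactification: you push forward to $\Pp^1$, use ordinary cohomology of the extended sheaf, and then must carry along a boundary term at $\infty$, an $H^0$ term, and the purity of $H^1$ of a middle extension on a complete curve. The paper's proof stays on $\Aa^1$ and works with compactly supported cohomology: there the trace formula is exact (no term at infinity), $H^0_c(\Aa^1\times\bar{k},\sheaf{F})$ vanishes automatically because a middle-extension sheaf has no punctual sections, $H^2_c$ vanishes by hypothesis, Weil II gives weights $\leq 1$ on $H^1_c$, and the Euler--Poincar\'e formula on $\Aa^1$ yields $\dim H^1_c\leq \cond(\sheaf{F})+\rank(\sheaf{F})\,|\sing(\sheaf{F})|\leq 2\cond(\sheaf{F})^2$ with no further bookkeeping. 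Your version does close (the terms you leave as ``routine'' can indeed be absorbed, since the Euler--Poincar\'e bound on $\Pp^1$ leaves slack of order $\rank(\sheaf{F})$), but you explicitly stop short of verifying the constant $2$, which is exactly the fiddling the $\Aa^1$ argument is designed to avoid.

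Two points in your treatment of the extreme cohomology groups need repair. First, the hypothesis is $H^2_c(\Aa^1\times\bar{k},\sheaf{F})=0$, i.e.\ vanishing of the \emph{coinvariants} of the geometric monodromy on the generic fibre, whereas $H^0(\Pp^1\times\bar{k},\text{extension})$ is the space of \emph{invariants}; these are not interchangeable without knowing that punctual purity forces the geometric monodromy representation to be semisimple (Deligne, Weil II -- the paper invokes precisely this fact in the proof of Proposition~\ref{pr-inverse-1}). Simpler still: you do not need $H^0$ to vanish at all, since it has weights $\leq 0$ and hence contributes at most $\rank(\sheaf{F})$ to the trace, which you can absorb exactly as you absorb the stalk at infinity. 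Second, the parenthetical suggestion to ``discard'' geometrically trivial constituents is not legitimate if taken literally: such a constituent has trace function a constant of modulus $1$, so it contributes a sum of size $|k|$, far exceeding the target bound $2\cond(\sheaf{F})^2\sqrt{|k|}$; it is the hypothesis $H^2_c=0$ (together with geometric semisimplicity) that rules such constituents out, and they cannot simply be split off and ignored.
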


\begin{proof}
  By the Grothendieck-Lefschetz trace formula and the Riemann
  Hypothesis, we have 
$$
\Bigl|\sum_{x\in k}{
    \frtr{\sheaf{F}}{k}{x}}\Bigr|
\leq \dim H^1_c(\Aa^1\times\bar{k},\sheaf{F})\sqrt{|k|}
$$
since
$H^0_c(\Aa^1\times\bar{k},\sheaf{F})=H^2_c(\Aa^1\times\bar{k},\sheaf{F})=0$.
\par
By the Euler-Poincar\'e formula of Grothendieck--Ogg--Shafarevich
(see, e.g.,~\cite[Ch. 14]{katz-equid}), we also know that, under our
assumptions, we have
\begin{align*}
\dim
H^1_c(\Aa^1\times\bar{k},\sheaf{F})
&=-\chi_c(\Aa^1\times\bar{k},\sheaf{F})\\
&=\sum_{x\in\sing(\sheaf{F})}{\swan_x(\sheaf{F})}
+\sum_{x\in \sing(\sheaf{F})\cap \Aa^1}{
(\rank(\sheaf{F})-\dim \sheaf{F}_x)
}
-\rank(\sheaf{F})\\
&\leq\cond(\sheaf{F})+\rank(\sheaf{F})|\sing(\sheaf{F})|
\\
&\leq 2\cond(\sheaf{F})^2
\end{align*}
hence the result.
\end{proof}

\section{Further examples}\label{sec-examples}

In this section, we will simply give a few examples of trace functions
of various kinds.  The remainder of the proof of
Theorem~\ref{th-inverse} is found in Section~\ref{sec-proof}, after a
statement of a stronger structural result in
Section~\ref{sec-refined}.

\begin{example}[Mixed characters]\label{ex-mixed}
  If $U\injecte \Aa^1$ is a dense open subset (defined over $k$), and
  $f_1$ (resp. $f_2$) is a regular function $f_1\,:\, U\lra \Aa^1$
  (resp. a non-zero regular function $f_2\,:\, U\lra \Gg_m$) both
  defined over $k$, one has the Artin-Schreier-Kummer lisse sheaf
$$
\sheaf{F}=\sheaf{L}_{\psi(f_1)}\otimes\sheaf{L}_{\chi(f_2)}
$$
defined for any non-trivial additive character $\psi\,:\, k\lra
\bar{\Qq}_{\ell}^{\times}$ and multiplicative character $\chi\,:\,
k^{\times}\lra \bar{\Qq}_{\ell}^{\times}$, which satisfy
$$
\frtr{\sheaf{F}}{k}{x}=\psi(f_1(x))\chi(f_2(x))
$$
for $x\in U(k)$. These sheaves are all of rank $1$ (in particular,
they are geometrically irreducible) and pointwise pure of weight
$0$. Moreover, possible geometric isomorphisms among them are
well-understood (see, e.g.,~\cite[Sommes Trig. (3.5.4)]{deligne}): if
$(g_1, g_2)$ is another pair of functions we have
$$
\sheaf{L}_{\psi(f_1)}\otimes\sheaf{L}_{\chi(f_2)}\simeq
\sheaf{L}_{\psi(g_1)}\otimes\sheaf{L}_{\chi(g_2)}
$$
if and only if: (1) $f_1-g_1$ is of the form 
$$
f_1-g_1=h^{|k|}-h+C
$$
for some regular function $h$ on $U$ and some constant $C\in\bar{k}$;
(2) $f_2/g_2$ is of the form
$$
\frac{f_2}{g_2}=Dh^{d}
$$
where $d\geq 2$ is the order of the multiplicative character $\chi$,
$h$ is a non-zero regular function on $U$ and $D\in\bar{k}^{\times}$. 
\par
Furthermore, the conductor of these sheaves is fairly easy to
compute. The singularities are located (at most) at $x\in
\Pp^1-U$. For each such $x$, the Swan conductor at $x$ is determined
only by $f_1$, and is bounded by the order of the pole of $f_1$ (seen
as a function $\Pp^1\lra \Pp^1$) at $x$ (there is equality if this
order is $<|k|$).
\par
In particular, if $f_1=0$ and $f_2$ is not a $d$-th power, then
$\sheaf{F}$ is tamely ramified everywhere, geometrically irreducible
and non-trivial, so that Corollary~\ref{cor-tame} applies (over
arbitrary finite fields).
\end{example}

\begin{example}[Families of Kloosterman sums]\label{ex-kloos}
  Deligne proved that, for any $p$ and $\ell\not=p$, and any
  non-trivial additive character $\psi$, there exists a
  middle-extension sheaf $\sheaf{K}\ell$ on $\Pp^1/\Fp_p$ which is
  pointwise pure of weight $0$, geometrically irreducible, lisse on
  $\Gg_m$, and satisfies
$$
\frtr{\sheaf{K}\ell}{k}{a}=-\frac{1}{\sqrt{|k|}}\sum_{x\in k^{\times}}{
\psi(ax+x^{-1})
}
$$
for any finite extension $k/\Fp_p$ and $a\in k$. This sheaf is of rank
$2$, tamely ramified at $0$ and wildly ramified at $\infty$ with Swan
conductor $1$, so that $\cond(\sheaf{K}\ell)=2+1+1=4$.
\end{example}

\begin{example}[Point-counting functions]
  The following examples are studied by Katz~\cite[Ex.
  7.10.2]{katz-esde}. Let $C/k$ be a smooth projective geometrically
  connected algebraic curve, and
$$
f\,:\, C\lra \Pp^1
$$
a non-constant map defined over $k$ of degree $d<p$. Let $D\subset C$
be the divisor of poles of $f$. Let $Z\subset C-D$ be the set of zeros
of the differential $df$, and let $S=f(Z)$ be the set of singular
values of $f$. Then, denoting by
$$
f_0\,:\, C-D\lra \Aa^1
$$
the restriction of $f$ to $C-D$, the sheaf
$$
\sheaf{F}_f=\ker(\Tr\,:\, f_{0,*}\bar{\Qq}_{\ell}\lra
\bar{\Qq}_{\ell})
$$
is a middle-extension sheaf on $\Aa^1/k$, of rank $\deg(f)-1$,
pointwise pure of weight $0$ and lisse on $\Aa^1-S$ with
$$
\frtr{\sheaf{F}_f}{k}{x}=|\{y\in C(k)\,\mid\, f(y)=x\}|-1
$$
for $x\in k-S$. This sheaf is also everywhere tamely ramified, so its
conductor is $|Z|+\deg(f)-1$. 
\par
In many cases, $\sheaf{F}_f$ is also geometrically irreducible. For
instance, this happens when $f$ is \emph{supermorse}, defined to mean
that $\deg(f)<p$, that all zeros of $df$ are simple, and that $f$
separates these zeros (i.e., $|S|=|Z|$).
\end{example}

\begin{example}[Further formalism]
  There exists a Fourier transform on middle-extension sheaves
  corresponding to the Fourier transform of trace functions, which was
  defined by Deligne and developed especially by Laumon; precisely,
  consider a middle-extension sheaf $\sheaf{F}$ which is geometrically
  irreducible, of weight $0$, and not geometrically isomorphic to
  $\sheaf{L}_{\psi}$ for some additive character $\psi$. Fix a
  non-trivial additive character $\psi$. Then the Fourier transform
  $\sheaf{G}=\ft_{\psi}(\sheaf{F})(1/2)$ satisfies
$$
\frtr{\sheaf{G}}{k}{t}=-\frac{1}{\sqrt{|k|}}
\sum_{x\in k}{\frtr{\sheaf{F}}{k}{x}\psi(tx)}
$$
for $t\in k$, and it is a middle-extension sheaf, geometrically
irreducible and pointwise pure of weight $0$ (see~\cite[\S
7]{katz-esde} for a survey and details). Moreover, one can show that
the conductor of $\sheaf{G}$ is bounded polynomially in terms of the
conductor of $\sheaf{F}$ (see, e.g.,~\cite[Prop. 7.2]{fkm}, though the
definition of conductor is slightly different there). 
\par
In particular, applying the Fourier transform to the previous
examples, we find many examples of one-parameter families of
exponential sums arising as trace functions with bounded conductor,
namely
$$
x\mapsto -\frac{1}{\sqrt{|k|}}
\sum_{y\in (C-D)(k)}{e\Bigl(\frac{x f(y)}{p}\Bigr)}
$$
for the sheaves $\sheaf{F}_f$, and
$$
x\mapsto -\frac{1}{\sqrt{|k|}}
\sum_{y\in k}{\chi(f_2(y))\psi(f_1(y)+xy)}
$$
for Artin-Schreier-Kummer sheaves (for instance, the Kloosterman sums
$\sheaf{K}\ell$ of Example~\ref{ex-kloos} can be seen as the Fourier
transform of the Artin-Schreier sheaf $\sheaf{L}_{\psi(x^{-1})}$.)
\end{example}

\section{Refined structural results}\label{sec-refined}

We will deduce Theorem~\ref{th-inverse} from the following result
which gives stronger structural information concerning the Gowers
pnorms of trace functions of middle-extension sheaves.

\begin{theorem}[Structure theorem for Gowers norms, II]
\label{th-precise}
Let $p$ be a prime number and let $\ell\not=p$ be an auxiliary
prime. Let $d\geq 1$ be an integer such that $p>d$.
\par
Let $\sheaf{F}$ be a middle-extension $\ell$-adic sheaf on
$\Aa^1/\Fp_p$ which is pointwise pure of weight $0$ and arithmetically
semisimple. Then one of the following two conditions holds:
% \par
% -- The sheaf $\sheaf{F}$  contains geometrically a
% subsheaf which is lisse on $\Aa^1$ and with Swan conductor $\geq p$ at
% $\infty$;
% \par
\par
-- There exists an additive character $\psi$ of $\Fp_p$, possibly
trivial, and a polynomial $P\in \Fp_p[X]$ of degree at most $d-1$ such
that $\sheaf{F}$ geometrically contains the Artin-Schreier sheaf
$\sheaf{L}_{\psi(P)}$;
\par
-- Or else we have
\begin{equation}\label{eq-bound-inverse-repeat}
  U_d(\sheaf{F};\Fp_p)\leq (5\cond(\sheaf{F}))^{(d+1)2^d}p^{-1}.
\end{equation}
\end{theorem}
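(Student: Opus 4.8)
\emph{Strategy and reductions.} The plan is to argue by induction on $d\geq1$, using Lemma~\ref{lm-sheaf-rec} to reduce the bound for $U_{d+1}$ to bounds for the $U_d$-pnorms of the auxiliary sheaves $\xi_h(\sheaf{F})=[+h]^*\sheaf{F}\otimes\dual(\sheaf{F})$. Since replacing $\sheaf{F}$ by its semisimplification changes neither its trace function nor its conductor, we may assume $\sheaf{F}$ arithmetically semisimple; being in addition pointwise pure of weight $0$, it is then geometrically semisimple \cite{deligne}, and we fix a decomposition $\sheaf{F}\simeq\bigoplus_i\sheaf{G}_i^{\oplus m_i}$ over $\overline{\Fp_p}$ into pairwise non-isomorphic geometrically irreducible summands; the number of distinct $\sheaf{G}_i$ is then at most $\rank(\sheaf{F})$, and each has $\cond(\sheaf{G}_i)\leq\cond(\sheaf{F})$. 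Next, the trivial bound $U_d(\sheaf{F};\Fp_p)\leq\cond(\sheaf{F})^{2^d}$ of Corollary~\ref{cor-trivial} already gives the desired inequality whenever $p\leq2\cond(\sheaf{F})^2$ (there is ample room in the powers of $5$ and of the conductor), so we may also assume $p>2\cond(\sheaf{F})^2$, in particular $p>\cond(\sheaf{F})\geq\cond(\sheaf{G}_i)$. For the base case $d=1$ one has $U_1(\sheaf{F};\Fp_p)=\bigl|\tfrac1p\sum_{x\in\Fp_p}\frtr{\sheaf{F}}{\Fp_p}{x}\bigr|^2$; if $\sheaf{F}$ geometrically contains the trivial sheaf $\bar{\Qq}_{\ell}=\sheaf{L}_{\psi(0)}$ we are in the first alternative, and otherwise $H^2_c(\Aa^1\times\overline{\Fp_p},\sheaf{F})=0$ (this group being, up to a Tate twist, the space of geometric coinvariants of $\sheaf{F}$, which vanishes exactly when $\sheaf{F}$ does not geometrically contain the trivial sheaf), so Theorem~\ref{th-rh} yields $U_1(\sheaf{F};\Fp_p)\leq4\cond(\sheaf{F})^4p^{-1}\leq(5\cond(\sheaf{F}))^4p^{-1}$.

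\emph{Inductive step, I: the dichotomy.} Assume the theorem in degree $d$, and let $p>d+1$. For $h$ outside the set $E$ of Lemma~\ref{lm-sheaf-rec} (which has $|E|\leq\cond(\sheaf{F})^2$), the sheaf $\xi_h(\sheaf{F})$ is a middle-extension sheaf, pointwise pure of weight $0$ and arithmetically semisimple (tensor product, dual and translation pullback preserve these properties over the characteristic-zero coefficient field), and an elementary estimate of ranks, singularities and Swan conductors for a tensor product gives $\cond(\xi_h(\sheaf{F}))\leq4\cond(\sheaf{F})^2$. Applying the induction hypothesis to $\xi_h(\sheaf{F})$ — legitimate since $p>d+1>d$ — we get, for each $h\notin E$, \emph{either}
$$U_d(\xi_h(\sheaf{F});\Fp_p)\leq\bigl(20\cond(\sheaf{F})^2\bigr)^{(d+1)2^d}p^{-1},$$
\emph{or else} $\xi_h(\sheaf{F})$ geometrically contains some $\sheaf{L}_{\psi(R_h)}$ with $\psi$ an additive character (possibly trivial) and $\deg R_h\leq d-1$; in the second case call $h$ \emph{exceptional}, and let $\mathcal{B}$ denote the set of exceptional $h\notin E$. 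Inserting these bounds into~\eqref{eq-induction}, together with the trivial bound $U_d(\xi_h(\sheaf{F});\Fp_p)\leq\rank(\sheaf{F})^{2^{d+1}}$ of Corollary~\ref{cor-trivial} for $h\in\mathcal{B}$ and the displayed error term for $h\in E$, we obtain
$$U_{d+1}(\sheaf{F};\Fp_p)\leq\bigl(20\cond(\sheaf{F})^2\bigr)^{(d+1)2^d}p^{-1}+\bigl(|E|+|\mathcal{B}|\bigr)\cond(\sheaf{F})^{2^{d+1}}p^{-1}.$$
If $|\mathcal{B}|\leq\cond(\sheaf{F})^2$, a routine comparison of the exponents $(d+1)2^d$, $2^{d+1}$ and $(d+2)2^{d+1}$ and of the numerical constants (one needs, e.g., $4^{d+1}\leq5^{d+3}$: this is where the base $5$ is calibrated) bounds the right-hand side by $(5\cond(\sheaf{F}))^{(d+2)2^{d+1}}p^{-1}$, the second alternative in degree $d+1$. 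So we may assume $|\mathcal{B}|>\cond(\sheaf{F})^2$.

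\emph{Inductive step, II: reduction to a rigidity statement.} Suppose $|\mathcal{B}|>\cond(\sheaf{F})^2$. For an exceptional $h$, writing $\xi_h(\sheaf{F})\simeq\bigoplus_{i,j}\bigl([+h]^*\sheaf{G}_i\otimes\dual(\sheaf{G}_j)\bigr)^{\oplus m_im_j}$ over $\overline{\Fp_p}$ and using that $[+h]^*\sheaf{G}_i$ and $\sheaf{G}_j\otimes\sheaf{L}_{\psi(R_h)}$ are geometrically irreducible, the condition that $\sheaf{L}_{\psi(R_h)}$ is a geometric summand of $\xi_h(\sheaf{F})$ forces a pair $(i,j)$ with $[+h]^*\sheaf{G}_i$ geometrically isomorphic to $\sheaf{G}_j\otimes\sheaf{L}_{\psi(R_h)}$. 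There are at most $\rank(\sheaf{F})^2\leq\cond(\sheaf{F})^2$ such pairs, so two distinct exceptional $h_1,h_2$ share a pair $(i,j)$; eliminating $\sheaf{G}_j$ between the two isomorphisms and using $[+a]^*[+b]^*=[+(a+b)]^*$ and $[+c]^*\sheaf{L}_{\psi(R)}=\sheaf{L}_{\psi(R(\,\cdot\,+c))}$, one finds $[+\delta]^*\sheaf{G}_i$ geometrically isomorphic to $\sheaf{G}_i\otimes\sheaf{L}_{\psi(S_\delta)}$ with $\deg S_\delta\leq d-1$ for $\delta=h_2-h_1\neq0$; the set of $\delta\in\Fp_p$ admitting such an isomorphism is, by the same elimination, a subgroup of $(\Fp_p,+)$ containing a nonzero element, hence is all of $\Fp_p$. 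The proof is then completed by applying to $\sheaf{G}=\sheaf{G}_i$ (which satisfies $\cond(\sheaf{G}_i)\leq\cond(\sheaf{F})<p$) the following rigidity statement: \emph{a geometrically irreducible middle-extension sheaf $\sheaf{G}$ on $\Aa^1/\Fp_p$ with $\cond(\sheaf{G})<p$ such that, for every $\delta\in\Fp_p$, $[+\delta]^*\sheaf{G}$ is geometrically isomorphic to $\sheaf{G}\otimes\sheaf{L}_{\psi(S_\delta)}$ for some $S_\delta\in\Fp_p[X]$ of degree $\leq d-1$, is geometrically isomorphic to $\sheaf{L}_{\psi(P)}$ for some $P\in\overline{\Fp_p}[X]$ with $\deg P\leq d$.} Granting this, $\sheaf{F}$ geometrically contains $\sheaf{G}_i\simeq\sheaf{L}_{\psi(P)}$ with $\deg P\leq d=(d+1)-1$: this is the first alternative in degree $d+1$, and the induction is complete.

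\emph{The main obstacle.} Apart from this rigidity statement the argument is bookkeeping — the recursion of Lemma~\ref{lm-sheaf-rec}, the pigeonhole over the boundedly many geometric constituents, and the tracking of conductors and constants so that the base $5$ survives the doubling of the exponent. The rigidity statement is the hard part, and it uses the largeness of $p$ in two places. First, translation-invariance (up to the twists $\sheaf{L}_{\psi(S_\delta)}$) makes the finite singular locus of $\sheaf{G}$ stable under $\Fp_p$, hence empty since it has fewer than $p$ points, so $\sheaf{G}$ is lisse on $\Aa^1$; showing $\rank(\sheaf{G})=1$ is subtler because the Artin--Schreier covering $\wp\colon x\mapsto x^{p}-x$ of $\Aa^1$ is finite étale, so $\sheaf{G}\otimes\dual(\sheaf{G})$ — which is translation-invariant up to geometric isomorphism — descends along $\wp$, and one has to show (using $\cond(\sheaf{G})<p$, via a ramification computation at $\infty$ in which $\wp$ multiplies Swan conductors by roughly $p$) that the descended sheaf is geometrically trivial. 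Second, once $\sheaf{G}\simeq\sheaf{L}_{\psi(P)}$ geometrically — with $\deg P<p$ after reducing $P$ modulo $X^{p}-X$ — the relations $P(X+\delta)-P(X)-S_\delta(X)=h^{p}-h+c$ with $\deg S_\delta\leq d-1$ force, on inspecting the coefficient of $X^{\deg P-1}$, the identity $(\deg P)\cdot\delta\cdot(\text{leading coefficient of }P)=0$ whenever $\deg P>d$; since $0<\deg P<p$ and $\delta\neq0$ this is impossible, so $\deg P\leq d$. This last inequality is precisely where the hypothesis $p>d$ of the theorem is used.
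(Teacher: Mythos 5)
Your overall induction scheme (recursion via Lemma~\ref{lm-sheaf-rec}, induction hypothesis applied to $\xi_h(\sheaf{F})$, pigeonhole over the exceptional $h$, and a translation-rigidity lemma for sheaves satisfying $[+\delta]^*\sheaf{G}\simeq\sheaf{G}\otimes\sheaf{L}_{\psi(S_\delta)}$ for all $\delta$) is the same as the paper's, but there is a genuine gap at the exit of your inductive step, and it comes precisely from the place where you deviate: you pigeonhole over the \emph{geometric} irreducible constituents $\sheaf{G}_i$ of $\sheaf{F}$, whereas the paper works with the \emph{arithmetically} irreducible components and the induced/geometrically-isotypic dichotomy of Lemma~\ref{lm-isotypic}. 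Your rigidity statement (correctly) only produces $\sheaf{G}_i\simeq\sheaf{L}_{\psi(P)}$ with $P\in\overline{\Fp_p}[X]$, and the relations $P(X+\delta)-P(X)\equiv S_\delta$ with $S_\delta\in\Fp_p[X]$ pin down the coefficients of $P$ in degrees $\geq 2$ but \emph{not} the linear coefficient; a constituent such as $\sheaf{L}_{\psi(\beta X)}$ with $\beta\notin\Fp_p$ passes all your constraints. Such constituents really occur: take $\sheaf{F}_0$ arithmetically irreducible, induced from $\sheaf{L}_{\psi\circ\Tr(\beta X)}$ over $\Fp_{p^2}$, which is geometrically $\sheaf{L}_{\psi(2\beta X)}\oplus\sheaf{L}_{\psi(2\beta^pX)}$ and has identically vanishing trace function on $\Fp_p$. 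If $\sheaf{F}=\sheaf{F}_0\oplus\sheaf{F}_1$ with $\sheaf{F}_1$ causing many exceptional $h$, then for \emph{every} $h$ the pair $(\sheaf{L}_{\psi(2\beta X)},\sheaf{L}_{\psi(2\beta X)})$ witnesses the containment demanded by the induction hypothesis (with $R_h=0$), so your pigeonhole may well select it, and your final sentence ``this is the first alternative in degree $d+1$'' is then unjustified: the theorem's first alternative requires $\psi$ a character of $\Fp_p$ and $P\in\Fp_p[X]$, and $\sheaf{L}_{\psi(2\beta X)}$ is not geometrically isomorphic to any such sheaf. In other words, your argument proves at best the variant of the theorem with $P$ allowed in $\overline{\Fp_p}[X]$, which is strictly weaker and does not make the induction on the stated theorem close. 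The paper avoids this by splitting $\sheaf{F}$ into arithmetically irreducible pieces: induced pieces have identically zero trace function on $U(\Fp_p)$ (so contribute trivially to all Gowers pnorms and can never be ``responsible'' for exceptional $h$), and non-induced pieces are geometrically isotypic, so their single geometric constituent is Frobenius-stable and the Artin--Schreier sheaf eventually produced by the rigidity lemma is automatically defined by a polynomial over $\Fp_p$. You need this (or an equivalent rationality argument) to conclude.

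A secondary, smaller concern is your sketch of the rigidity statement itself. Descending $\sheaf{G}\otimes\dual(\sheaf{G})$ along the Artin--Schreier isogeny is not automatic from translation-invariance up to isomorphism: descent requires a $G$-equivariant structure (a cocycle condition), and for the non-irreducible sheaf $\sheaf{G}\otimes\dual(\sheaf{G})$ the obstruction is not just a scalar issue. The paper's Lemma~\ref{lm-peel} instead solves $Q(X+h)-Q(X)=P_h$ (possible since $d<p-1$), twists $\sheaf{G}$ itself by $\sheaf{L}_{\psi(Q)}$ to obtain a genuinely translation-invariant \emph{irreducible} sheaf, descends that (here cyclicity of order $p$ and irreducibility kill the obstruction), and then combines invariance of Swan conductors under pushforward with Katz's break-lowering characterization of lisse irreducible sheaves with $\swan_\infty=\rank$; the alternative conclusion $\swan_\infty\geq p+\rank$ is what your hypothesis $\cond(\sheaf{G})<p$ excludes. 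Your outline points in the right direction but, as written, the descent step and the triviality claim for the descended sheaf are not proved; also, the asserted bound $\cond(\xi_h(\sheaf{F}))\leq 4\cond(\sheaf{F})^2$ is stated without proof (the paper establishes $5\cond(\sheaf{F})^2$, which is what your constant bookkeeping should use, and it still fits).
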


In this section, before proving this result, we check that it implies
all our previous statements.

\begin{proof}[Proof of Theorem~\ref{th-inverse}]
  Let $\sheaf{F}$ satisfy the assumptions of
  Theorem~\ref{th-inverse}. Since it is arithmetically semisimple, we
  can write
$$
\sheaf{F}=\sheaf{F}_1\oplus \sheaf{F}_2,
$$
where $\sheaf{F}_2$ is the sum of all irreducible components of
$\sheaf{F}$ which are geometrically isomorphic to an Artin-Schreier
sheaf $\sheaf{L}_{\psi(P)}$ for some polynomial $P$ of degree $\leq
d-1$. We then have
$$
\frfn{\sheaf{F}}{\Fp_p}=t_1+t_2,\text{ with }
t_i=\frfn{\sheaf{F}_i}{\Fp_p}.
$$
\par
Now we apply Theorem~\ref{th-precise} to $\sheaf{F}_1$ and
$\sheaf{F}_2$ separately. 
% In both cases, the first part of the trichotomy is excluded, because
% of the assumption that $\sheaf{F}$ does not geometrically contain a
% subsheaf lisse on $\Aa^1$ with Swan conductor $\geq p$ at $\infty$,
% which carries over to the summands $\sheaf{F}_1$ and $\sheaf{F}_2$.
% \par
By construction, the first part of the dichotomy can not hold for
$\sheaf{F}_1$, and hence we get the desired estimate
$$
U_d(t_1)=U_d(\sheaf{F}_1;\Fp_p)\leq
(5\cond(\sheaf{F}))^{(d+1)2^{d}}p^{-1},
$$
by~(\ref{eq-bound-inverse-repeat}).
\par
Now for $t_2$, we write $\sheaf{F}_2$ as a direct sum of geometrically
isotypic components, which are all of the form $\sheaf{F}_{\psi(P_i)}$
for some $P_i\in \Fp_p[X]$ of degree $\leq d-1$. There are at most
$\rank(\sheaf{F})$ such components, and the trace function for each of
them is of the form
$$
x\mapsto \beta_i\psi(P_i(x)),
$$
where $\beta_i$ is the sum of the twisting factors $\alpha$ of all the
arithmetic subsheaves of $\sheaf{F}_2$ which are geometrically
isomorphic to $\sheaf{L}_{\psi(P_i)}$. Since $\sheaf{F}_2$ is
pointwise pure of weight $0$, each $\alpha$ is (under $\iota$) of
modulus $1$, and hence $|\beta_i|\leq \rank(\sheaf{F})$. Thus $t_2$ is
of the form claimed in Theorem~\ref{th-inverse}.
\end{proof}

It is also clear that Theorem~\ref{th-precise} implies
Corollary~\ref{cor-inverse}, since if $\sheaf{F}$ is geometrically
irreducible, the only possibility for the first case of the dichotomy
is that $\sheaf{F}$ be geometrically isomorphic to a sheaf
$\sheaf{L}_{\psi(P)}$ with $\deg(P)\leq d-1$, which immediately
implies~(\ref{eq-strong-correlation}). 
\par
As for Corollary~\ref{cor-tame}, it follows for $d<p$ and $k=\Fp_p$
because if $\sheaf{F}$ is tamely ramified, the only Artin-Schreier
sheaf it may geometrically contain is the trivial sheaf. The general
case of Corollary~\ref{cor-tame} follows by inspection of the
following argument (we will make remarks indicating the relevant
points).
\par
Finally, we explain how this structure result implies
Theorem~\ref{th-explicit}; this will show that many more explicit
functions with small Gowers norms can be constructed from the examples
in Section~\ref{sec-examples}.

\begin{proof}[Proof of Theorem~\ref{th-explicit}]
  We note first that because of Corollary~\ref{cor-trivial}, we can
  assume that $d<p$ for the functions $\varphi_1$, $\varphi_2$,
  $\varphi_3$ and $\varphi_4$ (the respective ranks of the sheaves
  will be $1$, $1$, $2$, $2$), the bounds being trivial for $d\geq p$.
\par
(1) The function $\varphi_1$ arises as the case of
Example~\ref{ex-mixed} for $U$ the affine line itself, $\chi$ the
Legendre character modulo $p$, $f_1=0$ and $f_2=f$. The corresponding
sheaf $\sheaf{F}_1$ has rank $1$ so is necessarily geometrically
irreducible. It is tame and not geometrically trivial if $f$ is not
proportional to the square of another polynomial, and the
singularities are $\infty$ and the zeros of $f$, so the conductor of
$\sheaf{F}_1$ is at most $2+m$. Hence the first alternative of
Theorem~\ref{th-precise} can not hold (alternatively, we can apply
Corollary~\ref{cor-inverse} here, since the sheaf is tame.)
\par
(2) The function $\varphi_2$ is the case of Example~\ref{ex-mixed} for
$U=\Aa^1-\{0\}$, $\chi=1$, $f_1(x)=x^{-1}$. The corresponding sheaf
$\sheaf{F}_2$ is of rank $1$ so geometrically irreducible. It is
tamely ramified at $\infty$ and wildly ramified with Swan conductor
$1$ at $0$, so the conductor is $3$. The classificaition of
Artin-Schreier sheaves shows that the first alternative of
Theorem~\ref{th-precise} does not hold, so we obtain the desired
bound.
\par
(3) The function $-\varphi_3$ is Example~\ref{ex-kloos}, where it is
explained that the conductor is $4$. The Kloosterman sheaf is
geometrically irreducible (for instance, because it is the
sheaf-theoretic Fourier transform of the previous sheaf $\sheaf{F}_2$,
and the Fourier transform sends geometrically irreducible sheaves to
irreducibles sheaves.) Since it is of rank $2$, the first alternative
of Theorem~\ref{th-precise} is also impossible here, and we get the
stated estimate.
\par
(4) Let
$$
\mathcal{E}\,:\, v^2=u(u-1)(u-x)
$$
denote the Legendre family of elliptic curves over $\Fp_p$, viewed as
an affine algebraic surface over $\Aa^1-\{0,1\}$ by the projection
$$
\pi\,:\, \begin{cases}
\mathcal{E}\lra \Aa^1-\{0,1\}\\
(u,v,x)\mapsto x.
\end{cases}
$$
\par
The function $\varphi_4$ arises from the middle-extension to $\Pp^1$
of the sheaf $\sheaf{F}_4=R^1\pi_!\bar{\Qq}_{\ell}(1/2)$. This is a
tame geometrically irreducible sheaf of rank $2$, ramified at
$\{0,1,\infty\}$, so the conductor is $5$ and we obtain the result as
before.
\end{proof}

\section{Proof of the inverse theorem}\label{sec-proof}

We will now prove Theorem~\ref{th-precise}. As can be expected, the
argument is by induction on $d$, and the base case $d=1$ is an easy
consequence of the Riemann Hypothesis, for any finite field:

\begin{proposition}[Inverse theorem for $d=1$]\label{pr-inverse-1}
  Let $k$ be a finite field and let $\sheaf{F}$ be a middle-extension
  sheaf on $\Aa^1/k$ which is pointwise pure of weight $0$. If
  $\sheaf{F}\otimes\bar{k}$ does not contain a trivial subsheaf,
  then we have
\begin{equation}\label{eq-d1}
  U_1(\sheaf{F};k)\leq 4\cond(\sheaf{F})^4|k|^{-1}.
\end{equation}
\end{proposition}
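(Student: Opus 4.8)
The plan is to collapse the $U_1$-pnorm to a single character sum and then to invoke the Riemann Hypothesis in the shape of Theorem~\ref{th-rh}. Writing $\varphi=\frfn{\sheaf{F}}{k}$ and substituting $y=x+h$ in the definition of $\|\cdot\|_{U_1}$, one gets
$$
U_1(\sheaf{F};k)=\frac{1}{|k|^2}\sum_{h\in k}\sum_{x\in k}\varphi(x+h)\overline{\varphi(x)}
=\frac{1}{|k|^2}\Bigl|\sum_{x\in k}\frtr{\sheaf{F}}{k}{x}\Bigr|^{2},
$$
since for fixed $x$ the element $y=x+h$ runs over all of $k$ as $h$ does. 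So it is enough to prove $\bigl|\sum_{x\in k}\frtr{\sheaf{F}}{k}{x}\bigr|\leq 2\cond(\sheaf{F})^2|k|^{1/2}$; squaring then yields~\eqref{eq-d1}. This bound is exactly the conclusion of Theorem~\ref{th-rh}, whose only hypothesis beyond purity and the middle-extension property is $H^2_c(\Aa^1\times\bar{k},\sheaf{F})=0$, so the whole content of the proposition is to derive this vanishing from the assumption that $\sheaf{F}\otimes\bar{k}$ contains no trivial subsheaf.

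To that end I would first pass to the lisse locus. Let $j\,:\,U\injecte\Aa^1$ be a dense open set on which $\sheaf{F}$ is lisse; the canonical exact sequence $0\to j_!j^*\sheaf{F}\to\sheaf{F}\to\sheaf{Q}\to 0$, with $\sheaf{Q}$ a direct sum of skyscraper sheaves supported on $\Aa^1\setminus U$, has $\sheaf{Q}$ contributing nothing to $H^i_c$ for $i>0$, so $H^2_c(\Aa^1\times\bar{k},\sheaf{F})\simeq H^2_c(\Aa^1\times\bar{k},j_!j^*\sheaf{F})=H^2_c(U\times\bar{k},j^*\sheaf{F})$. Since $U$ is a smooth affine curve, Poincar\'e duality identifies this last group with the geometric coinvariants of $j^*\sheaf{F}$, Tate-twisted by $(-1)$; in particular it vanishes precisely when $j^*\sheaf{F}$ has no trivial quotient as a representation of the geometric fundamental group of $U$.

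The main, and essentially only delicate, point is then to convert \emph{no trivial geometric quotient} into the stated hypothesis \emph{no trivial geometric subsheaf}. This is where purity enters: a lisse sheaf which is pointwise pure of weight $0$ is geometrically semisimple, so for $j^*\sheaf{F}$ the existence of a trivial quotient is equivalent to the existence of a trivial subobject. Hence the assumption forces $H^2_c(\Aa^1\times\bar{k},\sheaf{F})=0$, Theorem~\ref{th-rh} applies verbatim, and the proposition follows. Everything except this semisimplicity step is either the elementary change of variables above or a citation of facts already recalled in the excerpt, so I do not expect any further obstacle.
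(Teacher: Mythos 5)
Your proof is correct and follows essentially the same route as the paper: rewrite $U_1(\sheaf{F};k)$ as $|k|^{-2}\bigl|\sum_{x}\frtr{\sheaf{F}}{k}{x}\bigr|^2$, invoke Theorem~\ref{th-rh}, and verify $H^2_c(\Aa^1\times\bar{k},\sheaf{F})=0$ by reducing to the lisse locus, identifying the top compactly supported cohomology with the geometric coinvariants, and using Deligne's purity-implies-geometric-semisimplicity to turn a trivial quotient into a trivial summand. The paper's proof is the same argument, phrased via birational invariance and the coinvariant formula rather than your explicit excision sequence with the skyscraper quotient.
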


\begin{proof}
We have by definition
\begin{align*}
  U_1(\sheaf{F};k)&
  =\frac{1}{|k|^2} \sum_{(h,x)\in k^2}{
    \frtr{\sheaf{F}}{k}{x+h}\overline{\frtr{\sheaf{F}}{k}{x}}}\\
  &=\Bigl|\frac{1}{|k|} \sum_{x\in k}{
    \frtr{\sheaf{F}}{k}{x}}\Bigr|^2,
\end{align*}
and hence the estimate~(\ref{eq-d1}) follows immediately from
Theorem~\ref{th-rh}, unless
$$
H^2_c(\Aa^1\times\bar{k},\sheaf{F})\not=0.
$$
\par
But, if $\sheaf{F}$ is lisse on the dense open subset $U$ of $\Aa^1$,
we have
$$
H^2_c(\Aa^1\times\bar{k},\sheaf{F})=
H^2_c(U\times\bar{k},\sheaf{F})=
(\sheaf{F}_{\bar{\eta}})_{\pi_1(U\times\bar{k},\bar{\eta})}(-1)
$$
by birational invariance and the coinvariant formula for the topmost
cohomology of a lisse sheaf. Since $\sheaf{F}$ is pointwise pure of
weight $0$ on $U$, it corresponds to a representation of
$\pi_1(U\times\bar{k},\bar{\eta})$ which is geometrically semisimple
(by results of Deligne~\cite{weilii}), and therefore
$$
(\sheaf{F}_{\bar{\eta}})_{\pi_1(U\times\bar{k},\bar{\eta})}\not=0
$$
implies that $\sheaf{F}$ contains a trivial summand.
\end{proof}

We will now deal with $U_d$-pnorms, $d\geq 2$, using an induction on
$d$ based on~(\ref{eq-induction}). Precisely, consider the following
statement, for a given integer $d\geq 1$:
\par
\smallskip
\par
\textbf{Inverse($d$)}.  \textit{For any prime $p$ with $p>d$, for any
  $\ell\not=p$, for any middle extension $\ell$-adic sheaf $\sheaf{F}$
  on $\Aa^1/\Fp_p$, pointwise pure of weight $0$ and arithmetically
  semisimple, \emph{either}
  % $\sheaf{F}$
  % contains geometrically a summand with Swan conductor at infinity
  % $\geq p$ which is lisse on $\Aa^1$, \emph{or}
  there exists an
  additive character $\psi$ of $\Fp_p$, possibly trivial, and a polynomial
  $P\in \Fp_p[X]$ of degree at most $d-1$ such that $\sheaf{F}$ contains
  geometrically a summand isomorphic to $\sheaf{L}_{\psi(P)}$,
  \emph{or else} we have
$$
U_d(\sheaf{F};\Fp_p)\leq (5\cond(\sheaf{F}))^{(d+1)2^d}p^{-1}.
$$
}
\par
\medskip
\par
Note that Proposition~\ref{pr-inverse-1} implies that
\textbf{Inverse($1$)} is valid, and that Theorem~\ref{th-precise}
simply states that \textbf{Inverse($d$)} holds for all $d\geq
1$. Hence we will be done by induction once we show:

\begin{proposition}[Induction step]\label{pr-induction}
  Let $d\geq 1$ be such that \emph{\textbf{Inverse($d$)}} holds. Then
  so does \emph{\textbf{Inverse($d+1$)}}.
\end{proposition}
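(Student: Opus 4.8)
To prove Proposition~\ref{pr-induction} I would feed the recursion~\refs{eq-induction} into \textbf{Inverse($d$)}, applied to each auxiliary sheaf $\xi_h(\sheaf{F})=[+h]^*\sheaf{F}\otimes\dual(\sheaf{F})$. First note that, by Corollary~\ref{cor-trivial}, the bound asserted in \textbf{Inverse($d+1$)} is implied by the trivial bound $U_{d+1}(\sheaf{F};\Fp_p)\leq\cond(\sheaf{F})^{2^{d+1}}$ unless $\cond(\sheaf{F})$ is very small compared with $p$; so I may and do assume that all conductors appearing below are $<p$. For $h\notin E$, the sheaf $\xi_h(\sheaf{F})$ is a middle‑extension sheaf, pointwise pure of weight $0$ by Lemma~\ref{lm-sheaf-rec}, and arithmetically semisimple, because a tensor product of semisimple representations over a field of characteristic $0$ is semisimple. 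Since $p>d+1>d$, \textbf{Inverse($d$)} applies to it, and a routine estimate on rank, Swan conductors and singularities gives $\cond(\xi_h(\sheaf{F}))\leq 5\cond(\sheaf{F})^2$. Thus for each $h\notin E$, \emph{either} $U_d(\xi_h(\sheaf{F});\Fp_p)\leq(25\cond(\sheaf{F})^2)^{(d+1)2^d}p^{-1}$, \emph{or} $\xi_h(\sheaf{F})$ geometrically contains a summand $\sheaf{L}_{\psi(P)}$ with $\deg P\leq d-1$; let $B$ be the set of $h\notin E$ for which the second alternative holds. Inserting the first bound into~\refs{eq-induction} and using the trivial bound $U_d(\xi_h(\sheaf{F});\Fp_p)\leq\rank(\sheaf{F})^{2^{d+1}}$ on $B\cup E$, one gets
$$U_{d+1}(\sheaf{F};\Fp_p)\leq(25\cond(\sheaf{F})^2)^{(d+1)2^d}p^{-1}+(|B|+|E|)\rank(\sheaf{F})^{2^{d+1}}p^{-1}.$$
Since $|E|<\cond(\sheaf{F})^2$, it then suffices to show that either $|B|$ is bounded polynomially in $\cond(\sheaf{F})$ — in which case a short computation, with plenty of room in the constants, yields $(5\cond(\sheaf{F}))^{(d+2)2^{d+1}}p^{-1}$ — or the first alternative of \textbf{Inverse($d+1$)} holds.

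To control $B$, decompose $\sheaf{F}\otimes\overline{\Fp_p}$ into geometrically irreducible summands $\sheaf{G}_a$. Then $h\in B$ exactly when, for some pair $(a,b)$, there is a geometric isomorphism $[+h]^*\sheaf{G}_a\simeq\sheaf{L}_{\psi(P)}\otimes\sheaf{G}_b$ with $\deg P\leq d-1$ (both sides being geometrically irreducible). For each $a$ set $\Hc_a=\{k\in\Fp_p : [+k]^*\sheaf{G}_a\simeq\sheaf{L}_{\psi(Q)}\otimes\sheaf{G}_a\text{ geometrically for some }Q\text{ with }\deg Q\leq d-1\}$. Composing translations and using that $\deg Q<p$ is preserved under $Q(x)\mapsto Q(x+k)$, one checks that $\Hc_a$ is a subgroup of $(\Fp_p,+)$, hence equals $\{0\}$ or $\Fp_p$; moreover, substituting one isomorphism into another shows that if $h_1,h_2$ both witness $B$ through the same pair $(a,b)$ then $h_1-h_2\in\Hc_a$, so the set of such $h$ lies in a single coset of $\Hc_a$. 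If $\Hc_a=\{0\}$ for every $a$, each pair $(a,b)$ witnesses at most one $h$, whence $|B|\leq(\#\text{geometric components})^2\leq\cond(\sheaf{F})^2$, and the computation above finishes the argument.

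The remaining case — and, I expect, the main obstacle — is that $\Hc_a=\Fp_p$ for some $a$. Writing $\sheaf{G}=\sheaf{G}_a$, we then have a geometrically irreducible sheaf with $\cond(\sheaf{G})<p$ such that $[+k]^*\sheaf{G}\simeq\sheaf{L}_{\psi(Q_k)}\otimes\sheaf{G}$ geometrically for every $k\in\Fp_p$, with $\deg Q_k\leq d-1$, and I must show $\sheaf{G}\simeq\sheaf{L}_{\psi(R)}$ geometrically for some additive $\psi$ and some $R$ of degree $\leq d$; as $\sheaf{G}$ is a geometric summand of $\sheaf{F}$, this is exactly the first alternative of \textbf{Inverse($d+1$)}. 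The plan for this step is: (i) twisting by $\sheaf{L}_{\psi(Q_k)}$ (a polynomial) does not move the finite singular locus while $[+k]^*$ translates it, so this locus is $\Fp_p$‑invariant; it cannot contain an entire $\Fp_p$‑orbit (that would be $p$ geometric points, forcing $\cond(\sheaf{G})\geq p$), so $\sheaf{G}$ is lisse on $\Aa^1$. (ii) Since $\rank(\sheaf{G})\leq\cond(\sheaf{G})<p$, the sheaf $\sheaf{G}$ admits no geometric self‑twist $\sheaf{L}_{\psi(\sigma)}\otimes\sheaf{G}\simeq\sheaf{G}$ by a geometrically non‑trivial Artin–Schreier sheaf (such a twist would exhibit $\sheaf{G}$ as induced from a degree‑$p$ Artin–Schreier cover, forcing $p\mid\rank(\sheaf{G})$); hence the $Q_k$ are well defined modulo constants, they satisfy a cocycle relation $Q_{k+k'}(x)\equiv Q_k(x+k')+Q_{k'}(x)$, and a dimension count in $\Fp_p[x]$ — here $d<p$ is essential — shows that this cocycle is $\delta R$ for some $R$ of degree $\leq d$, i.e.\ $R(x+k)-R(x)\equiv Q_k(x)$ modulo constants for all $k$. (iii) Then $\sheaf{G}'=\sheaf{G}\otimes\sheaf{L}_{\psi(-R)}$ is invariant under all $\Fp_p$‑translations; after normalising the isomorphisms compatibly (possible as $\sheaf{G}$ is geometrically irreducible) it descends geometrically along the Artin–Schreier map $x\mapsto x^p-x$ to a geometrically irreducible sheaf $\sheaf{H}$ lisse on $\Aa^1$, and if $\sheaf{H}$ were not geometrically constant, Grothendieck–Ogg–Shafarevich would give $\swan_\infty(\sheaf{H})\geq\rank(\sheaf{H})$, so $\sheaf{H}$ would have a break $\geq 1$ at infinity, which pulling back along the wildly ramified degree‑$p$ cover (of break $1$) would push up to $\geq p$, forcing $\cond(\sheaf{G}')\geq p$ — a contradiction. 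Hence $\sheaf{H}$, and so $\sheaf{G}'$, is geometrically constant; being geometrically irreducible it has rank $1$, so $\sheaf{G}\simeq\sheaf{L}_{\psi(R)}$ geometrically, as required. The delicate points are the descent and especially the ramification estimate in step (iii), together with keeping $\deg R\leq d$ in step (ii); these are exactly where $p>d+1$ and the prime‑field hypothesis do the real work.
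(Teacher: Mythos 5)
Your overall architecture is essentially the paper's in different clothing (recursion plus \textbf{Inverse($d$)} applied to the $\xi_h$'s, an exceptional set controlled by a translation-quasi-invariance subgroup of $\Fp_p$, and a descent along the Artin--Schreier cover in the fully invariant case), and most of it is sound: Chevalley's theorem does give semisimplicity of $\xi_h(\sheaf{F})$, the coset count $|B|\leq\cond(\sheaf{F})^2$ works, and steps (i)--(ii) are fine. The genuine gap is the ramification claim in step (iii): it is \emph{false} that a break $\geq 1$ at infinity pulls back along the degree-$p$ cover $x\mapsto x^p-x$ to a break $\geq p$. The Artin--Schreier characters of the deck group have break exactly $1$ and pull back to the \emph{trivial} sheaf (this is the defining property of $\sheaf{L}_{\psi}$); only breaks strictly larger than $1$ get pushed above $p$, while breaks $\leq 1$ can drop to $0$. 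Consequently you cannot conclude that $\sheaf{H}$ is geometrically constant. Concretely, take $\sheaf{G}=\sheaf{L}_{\psi(bx)}$ with $b\neq 0$: all $Q_k$ are constants, so (the $Q_k$ being defined only modulo constants) your cocycle argument may return $R=0$, and then $\sheaf{G}'=\sheaf{G}$ is lisse, irreducible, translation-invariant, non-constant, with $\swan_{\infty}=1$ -- no contradiction arises, and the descended $\sheaf{H}$ is a non-constant rank-one sheaf of break exactly $1$. Structurally, since the $Q_k$ are only defined modulo constants, $R$ is only determined up to a linear polynomial, so the strongest true statement is $\sheaf{G}'\simeq\sheaf{L}_{\psi(ax)}$ for some $a$, not that $\sheaf{G}'$ is constant; your argument as written ``derives'' a contradiction that does not exist.

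The missing ingredient is exactly what the paper's Lemma~\ref{lm-peel} supplies: the equality case of the inequality $\swan_{\infty}\geq\rank$ for an irreducible non-trivial lisse sheaf on $\Aa^1$, which via Katz's break-lowering lemma forces such a sheaf with $\swan_{\infty}=\rank$ to be a rank-one Artin--Schreier sheaf. After descending to $\sheaf{H}$, one must consider the twists $\sheaf{H}\otimes\sheaf{L}_{\eta}$ by the characters $\eta$ of the deck group and use the invariance of Swan conductors under pushforward for virtual representations of degree $0$: either some twist attains the equality case, in which case $\sheaf{G}'\simeq\sheaf{L}_{\psi(ax)}$ and hence $\sheaf{G}\simeq\sheaf{L}_{\psi(R+ax)}$ with degree still $\leq d$ (so the first alternative of \textbf{Inverse($d+1$)} holds -- note you must allow this extra linear term), or every twist has $\swan_{\infty}>\rank$ and then $\swan_{\infty}(\sheaf{G}')\geq p+\rank(\sheaf{G}')$, contradicting the small-conductor assumption. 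With this repair (and noting that only the generator $h$ of $\Fp_p$ is needed to build $R$, making the full cocycle computation unnecessary), your proof goes through; without it, the key case of the induction is not proved.
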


% It follows from these propositions that \textbf{Inverse($d$)} holds
% for all $d\geq 1$ by induction. Then % Now combining the conclusion with
% % Proposition~\ref{pr-interpret-alternative}, we see that
% Theorem~\ref{th-precise} also follows. Furthermore, if $d<p$ and
% $\sheaf{F}$ is geometrically irreducible, we
% have~(\ref{eq-bound-inverse}) unless $\sheaf{F}$ is lisse on $\Aa^1$
% with Swan conductor $\geq p$, or geometrically isomorphic to
% $\sheaf{L}_{\psi(P)}$ for some polynomial $P$ of degree $\leq
% d-1$. This proves Corollary~\ref{cor-inverse}. As for
% Corollary~\ref{cor-fp}, we just observe that if either $d\geq p$, or
% $\cond(\sheaf{F})\geq p$, then the estimate~(\ref{eq-bound-inverse-p})
% holds trivially (by~(\ref{eq-trivial-pnorm})).
%\par
For the proof of Proposition~\ref{pr-induction}, we will use two
lemmas. Before stating the first, we introduce some terminology. Given
a finite field $k$ and an open dense subset $U/k$ of $\Aa^1/k$, a
lisse sheaf $\sheaf{F}$ on $U$ is called \emph{induced} if it is
arithmetically irreducible, and the corresponding representation of
$\pi_1(U,\bar{\eta})$ is isomorphic to an induced representation
$\Ind_{H}^{\pi_1(U,\bar{\eta})}\rho_0$, for some proper normal
finite-index subgroup $H$ of $\pi_1(U,\bar{\eta})$ containing
$\pi_1(U\times\bar{k},\bar{\eta})$ and some irreducible representation
$\rho_0$. We need the following corollary of elementary representation
theory: if $\sheaf{F}$ is arithmetically irreducible on $U$, and is
not induced, then it is geometrically isotypic.

% is a direct consequence of  Clifford Theory and the exact sequence
% $$
% 1\lra \pi_1(U\times\bar{k},\bar{\eta})\lra \pi_1(U,\bar{\eta})
% \lra \hat{\Zz}\lra 1.
% $$

\begin{lemma}\label{lm-isotypic}
  Let $k$ be a finite field of characteristic $p$, and let $\sheaf{F}$
  be a middle-extension $\ell$-adic sheaf on $\Aa^1/k$, which is
  arithmetically irreducible and lisse on some dense open set
  $U\injecte \Aa^1$. 
\par
\emph{(1)} Either the sheaf $\sheaf{F}$ is geometrically isotypic on
$U$, or its trace function is identically zero on $U(k)$.
\par
\emph{(2)} Suppose that $\sheaf{F}$ is geometrically isotypic, and let
$\rho$ denote the geometrically irreducible representation of
$\pi_1(\bar{U},\bar{\eta})$ which corresponds to the isotypic
component of $\sheaf{F}$.  Suppose further that, for some $h\in k$,
some polynomial $P\in k[X]$ and $\ell$-adic character $\psi$, we have
a geometric summand
$$
\sheaf{L}_{\psi(P)}\injecte [+h]^*\sheaf{F}\otimes\dual(\sheaf{F})
$$
on $U$. Then we have a geometric isomorphism
$$
[+h]^*\sheaf{F}\simeq \sheaf{F}\otimes\sheaf{L}_{\psi(P)}.
$$
\end{lemma}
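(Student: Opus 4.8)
The plan is to treat the two parts separately, relying on the elementary representation-theoretic fact recalled just above the statement (``not induced'' $\Rightarrow$ ``geometrically isotypic'') together with a small piece of arithmetic about Frobenius classes.

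\emph{Part (1).} Write $G=\pi_1(U,\bar\eta)$, $G^{\mathrm{geom}}=\pi_1(U\times\bar k,\bar\eta)$, and let $\rho$ be the (arithmetically irreducible) representation of $G$ attached to $\sheaf{F}$ on $U$. If $\sheaf{F}$ is not geometrically isotypic then by the recalled fact it is induced, so $\rho\simeq\Ind_H^G\rho_0$ for some proper normal finite-index subgroup $H$ with $G^{\mathrm{geom}}\subseteq H\subsetneq G$ and some irreducible $\rho_0$. I would then invoke the standard formula for the character of an induced representation: when $H$ is normal, $\Tr\rho(g)=0$ for every $g\notin H$, since $tgt^{-1}\notin H$ for all $t\in G$. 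It remains to check that for every $x\in U(k)$ the geometric Frobenius class $\frob_x$ lies outside $H$. For this, observe that $U$ is geometrically connected, so $G/G^{\mathrm{geom}}\simeq\Gal(\bar k/k)\simeq\hat{\Zz}$; hence $G/H$ is a nontrivial finite quotient of $\hat{\Zz}$, i.e.\ a nonzero finite cyclic group, and the image of $\frob_x$ in $G/H$ is the image of the Frobenius of $k$, which topologically generates $\hat{\Zz}$ and therefore generates $G/H$. A generator of a nontrivial cyclic group is not the identity, so $\frob_x\notin H$ and $\frtr{\sheaf{F}}{k}{x}=\Tr\rho(\frob_x)=0$ for all $x\in U(k)$, as claimed.

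\emph{Part (2).} Now $\sheaf{F}$ is geometrically isotypic; since it is pointwise pure of weight $0$ it is geometrically semisimple (Deligne), so on $U\times\bar k$ we have $j^*\sheaf{F}\simeq\rho^{\oplus m}$ for the geometrically irreducible $\rho$ of the statement and some $m\geq1$. Because translation by $h$ is an automorphism of $\Aa^1_{\bar k}$, $[+h]^*\sheaf{F}$ is geometrically $([+h]^*\rho)^{\oplus m}$ with $[+h]^*\rho$ geometrically irreducible, while $\dual(\sheaf{F})$ is geometrically $(\rho^\vee)^{\oplus m}$, so $[+h]^*\sheaf{F}\otimes\dual(\sheaf{F})$ is geometrically $([+h]^*\rho\otimes\rho^\vee)^{\oplus m^2}$. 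The hypothesis therefore provides a nonzero $G^{\mathrm{geom}}$-equivariant map $\sheaf{L}_{\psi(P)}\to[+h]^*\rho\otimes\rho^\vee$, equivalently (by the adjunction $\Hom(A,B\otimes C^\vee)\simeq\Hom(A\otimes C,B)$) a nonzero map $\sheaf{L}_{\psi(P)}\otimes\rho\to[+h]^*\rho$. Since $\sheaf{L}_{\psi(P)}$ has rank $1$, $\rho\otimes\sheaf{L}_{\psi(P)}$ is geometrically irreducible, and so is $[+h]^*\rho$; by Schur's lemma this map is an isomorphism, giving a geometric isomorphism $[+h]^*\rho\simeq\rho\otimes\sheaf{L}_{\psi(P)}$. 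Taking $m$-th powers yields $[+h]^*\sheaf{F}\simeq\sheaf{F}\otimes\sheaf{L}_{\psi(P)}$ on the common lisse locus, and this extends uniquely to the middle extensions.

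The main obstacle is the ``induced'' case of Part (1): one needs both the vanishing of the character of a representation induced from a \emph{normal} subgroup outside that subgroup, and --- the genuinely arithmetic point --- the fact that every Frobenius class of a $k$-rational point of $U$ surjects onto a generator of the finite cyclic quotient $G/H$, hence avoids $H$. Granting these, Parts (1) and (2) reduce to a one-line character computation and a one-line application of Schur's lemma, respectively.
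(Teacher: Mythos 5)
Your proof is correct and follows essentially the same route as the paper: for part (1), vanishing of the character of a representation induced from the normal subgroup $H$ outside $H$, plus the observation that the Frobenius at any $k$-point of $U$ maps to a generator of the nontrivial finite cyclic quotient $\pi_1(U,\bar\eta)/H$ and hence lies outside $H$; for part (2), reduction to the geometrically irreducible factor and Schur's lemma, then extension from $U$ to $\Aa^1$ by middle-extension functoriality. The only cosmetic difference is your appeal to purity and Deligne for geometric semisimplicity in part (2), which is neither assumed in the lemma nor needed, since the isotypic hypothesis (or Clifford theory for the arithmetically irreducible $\sheaf{F}$) already yields the geometric decomposition $\sheaf{F}\simeq\rho^{\oplus m}$ on $U$.
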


\begin{proof}
  (1) follows from the remark before the statement: if $\sheaf{F}$ is
  not geometrically isotypic, then it is induced so that, on $U$, the
  corresponding representation $\rho$ is given by
$$
\rho\simeq \Ind_{H}^{\pi_1(U,\bar{\eta})}\rho_0.
$$
\par
It is however elementary that, in this situation, the character of
$\rho$ is identically zero on the non-trivial cosets of $H$, and all
Frobenius $\frob_{x,k}$ corresponding to $x\in U(k)$ have this
property since we have
$$
H=\{g\in \pi_1(U,\bar{\eta})\,\mid\, \deg(g)\equiv 0\mods{m}\}
$$
for some $m\geq 2$, where $\deg$ is the degree which gives an
isomorphism
$$
\deg\,:\, \pi_1(U,\bar{\eta})/\pi_1(U\times\bar{k},\bar{\eta})\lra
\hat{\Zz},
$$
and since $\deg(\frob_{x,k})=-1$ for all $x\in U(k)$.
\par
(2) We have a geometric isomorphism $\sheaf{F}\simeq n\rho$ on $U$,
for some $n\geq 1$. Then the assumption gives
$$
\sheaf{L}_{\psi(P)}\injecte
[+h]^*\sheaf{F}\otimes\dual(\sheaf{F})\simeq
n^2 ([+h]^*\sheaf{\rho}\otimes\rho'),
$$
on $U$, and since the right-hand side is isotypic and the left-hand
side irreducible, we derive the existence of a geometric injection
$$
\sheaf{L}_{\psi(P)}\injecte [+h]^*\rho\otimes\rho',
$$
and therefore of a geometric isomorphism
$$
[+h]^*\rho\simeq \rho\otimes \sheaf{L}_{\psi(P)},
$$
and hence
$$
[+h]^*\sheaf{F}\simeq \sheaf{F}\otimes\sheaf{L}_{\psi(P)}
$$
by taking copies of this, first on $U$, and then on $\Aa^1$ because
the sheaves involved are middle-extensions.
\end{proof}

The next lemma gives some properties of lisse sheaves on
$\Aa^1_{\Fp_p}$ which are (geometrically) ``almost'' invariant under
some non-trivial translations. It complements certain results
of~\cite{fkm} (where the invariance under homographies in $\PGL_2$
acting on the projective line is a crucial issue, and where only the
base field $k=\Fp_p$ is considered.) This is also where the
restriction to $k=\Fp_p$ occurs; roughly speaking, to extend
Theorem~\ref{th-inverse} to any finite field of characteristic $p$, we
would need a similar statement as the second part of this lemma to be
valid when $G$ is an arbitrary finite subgroup of
$\bar{\Fp}_p$. However, if $\sheaf{F}$ is tame, the statement is
vacuously true (with no assumption on $d$ in (2)), simply because
there is no non-trivial tame sheaf which is lisse on $\Aa^1$.

\begin{lemma}\label{lm-peel}
  Let $\bar{k}$ be an algebraic closure of $\Fp_p$, $\ell\not=p$ an
  auxiliary prime. Let $\sheaf{F}$ be a lisse $\ell$-adic sheaf on
  $\Aa^1/\bar{k}$ such that $\sheaf{F}$ is irreducible and
  non-trivial.
\par
\emph{(1)} We have $\swan_{\infty}(\sheaf{F})\geq \rank(\sheaf{F})$,
with equality if and only if $\sheaf{F}$ is isomorphic to
$\sheaf{L}_{\psi}$ for some non-trivial $\ell$-adic additive
character.
\par
\emph{(2)} Let $d<p-1$ be given. Suppose there exists a cyclic
subgroup $G\subset \bar{k}$ of order $p$ such that we have
isomorphisms
\begin{equation}\label{eq-quasi-inv}
[+h]^*\sheaf{F}\simeq \sheaf{F}\otimes\sheaf{L}_{\psi(P_h)}
\end{equation}
on $\Aa^1$ for all $h\in G$, where $P_h\in \bar{k}[X]$ has degree
$\leq d$. Then $\sheaf{F}$ is either isomorphic to
$\sheaf{L}_{\psi(Q)}$ for some non-trivial additive character $\psi$
and polynomial $Q$ of degree $\leq d+1$, or it satisfies
$$
\swan_{\infty}(\sheaf{F})\geq p+\rank(\sheaf{F}).
$$
\end{lemma}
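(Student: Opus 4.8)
The plan is to prove part (1) first, then use it to control the Swan conductor of $\sheaf{F}$ at infinity under the quasi-invariance hypothesis in part (2). For part (1), I would work with the break decomposition of $\sheaf{F}$ at $\infty$ as a representation of the inertia group $I_\infty$. Since $\sheaf{F}$ is lisse and nontrivial on all of $\Aa^1$, it is wildly ramified at $\infty$ (a nontrivial sheaf lisse on $\Aa^1$ cannot be tame, as recalled in the text), so every break at $\infty$ is $\geq 1$; hence $\swan_\infty(\sheaf{F}) = \sum (\text{breaks}) \geq \rank(\sheaf{F})$. Equality forces all breaks to equal exactly $1$ and, via the known structure of break-$1$ representations of $I_\infty$ (they are, up to tame twists, built from Artin--Schreier characters $\sheaf{L}_{\psi(cx)}$), together with irreducibility over $\Aa^1$, the sheaf must be geometrically $\sheaf{L}_\psi$ for a nontrivial additive character. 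I would cite the relevant facts from \cite{katz-gkm} or \cite{katz-esde} rather than reprove the break formalism.

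For part (2), suppose $\sheaf{F}$ is not geometrically $\sheaf{L}_{\psi(Q)}$ with $\deg Q \leq d+1$; I must show $\swan_\infty(\sheaf{F}) \geq p + \rank(\sheaf{F})$. The key is to iterate the quasi-invariance \eqref{eq-quasi-inv}: fixing a generator $h_0$ of $G$, the relations $[+h]^*\sheaf{F} \simeq \sheaf{F} \otimes \sheaf{L}_{\psi(P_h)}$ for $h = m h_0$, $0 \leq m < p$, are governed by a cocycle condition, and composing the translation by $h_0$ with itself $p$ times returns to the identity; this forces the accumulated phase $\sum_{m} P_{h_0}(X + m h_0)$ (suitably interpreted) to be trivial, which constrains $\deg P_{h_0} \leq d$ and links $P_h$ to $P_{h_0}$. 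Then I would restrict to $\infty$: the isomorphism $[+h]^*\sheaf{F}|_{I_\infty} \simeq \sheaf{F}|_{I_\infty} \otimes \sheaf{L}_{\psi(P_h)}|_{I_\infty}$ means the break decomposition of $\sheaf{F}$ at $\infty$ is invariant, up to the twist by $\sheaf{L}_{\psi(P_h)}$, under the $G$-action. The twist $\sheaf{L}_{\psi(P_h)}$ has a single break equal to $\deg P_h \leq d < p-1$ (if $P_h$ is nonconstant), while the $G$-action by translation permutes/modifies the wild part in a way that, if some break $\lambda$ of $\sheaf{F}$ exceeds $1$ but the conclusion $\sheaf{L}_{\psi(Q)}$ fails, produces a $G$-orbit of size $p$ in the relevant Swan data; summing contributions over this orbit yields a lower bound $\swan_\infty(\sheaf{F}) \geq p + \rank(\sheaf{F})$. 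More precisely, I expect to argue that either all breaks are $1$ (handled by part (1) and the iteration to get $\deg Q \leq d+1$), or the largest break $\lambda > 1$, and then the $p$ distinct twisted translates $[+mh_0]^*\sheaf{F} \simeq \sheaf{F} \otimes \sheaf{L}_{\psi(P_{mh_0})}$ force $\sheaf{F}|_{I_\infty}$ to contain, after peeling, a subquotient whose Swan conductor already exceeds $p$, the extra $\rank(\sheaf{F})$ coming from the universal break-$\geq 1$ bound on the complement.

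The main obstacle will be the second paragraph's bookkeeping: making precise how the family of twists $\{\sheaf{L}_{\psi(P_h)}\}_{h \in G}$ interacts with the break decomposition at $\infty$, and extracting the clean numerical bound $p + \rank(\sheaf{F})$. Concretely, the difficulty is that translation by $h$ and the twist by $\sheaf{L}_{\psi(P_h)}$ both act on the wild part at $\infty$, and one must disentangle them: the translation $[+h]$ acts on $\Aa^1$ but fixes $\infty$, so on $I_\infty$ it is (geometrically) trivial, meaning \eqref{eq-quasi-inv} restricted to $I_\infty$ reads $\sheaf{F}|_{I_\infty} \simeq \sheaf{F}|_{I_\infty} \otimes \sheaf{L}_{\psi(P_h)}|_{I_\infty}$ for all $h \in G$. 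This is the real leverage: $\sheaf{F}|_{I_\infty}$ is invariant under tensoring by each $\sheaf{L}_{\psi(P_h)}|_{I_\infty}$. If some $P_h$ has degree exactly $d' $ with $1 \leq d' \leq d$, then $\sheaf{L}_{\psi(P_h)}$ has break $d'$, and invariance of $\sheaf{F}|_{I_\infty}$ under tensoring by a break-$d'$ character, combined with $d' < p-1 < p$, forces — via the classification of wild representations and a counting argument on the set of breaks — that $\sheaf{F}|_{I_\infty}$ either has a piece of break $\geq p$ (giving Swan $\geq p$ on that piece and $\geq \rank$ on the rest, hence the claimed bound) or that all the relevant $P_h$ are constant. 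In the latter case the translates satisfy $[+h]^*\sheaf{F} \simeq \sheaf{F} \otimes (\text{constant twist})$ for all $h \in G$, and then I would invoke the rigidity of $\sheaf{F}$ under the full group $G$ of translations of order $p$ to conclude (as in the circle of ideas in \cite{fkm}) that $\sheaf{F}$ is geometrically $\sheaf{L}_{\psi(Q)}$ with $\deg Q \leq d+1$, contradicting our standing assumption. I would isolate this rigidity step as a sublemma, as it is the crux and the place where the hypothesis $d < p-1$ and the order-$p$ cyclic subgroup are both essential.
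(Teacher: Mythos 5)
The central step of your part (2) does not work. You claim that, because translation by $h$ fixes the point $\infty$, its restriction to the inertia group $I(\infty)$ is trivial, so that \eqref{eq-quasi-inv} becomes an isomorphism $\sheaf{F}|_{I(\infty)}\simeq \sheaf{F}|_{I(\infty)}\otimes\sheaf{L}_{\psi(P_h)}|_{I(\infty)}$. This is false: $[+h]^*$ acts non-trivially on isomorphism classes of $I(\infty)$-representations (it preserves breaks and Swan conductors, but not the representations themselves). For instance, for $p>2$ one has geometrically $[+h]^*\sheaf{L}_{\psi(x^2)}\simeq \sheaf{L}_{\psi(x^2)}\otimes\sheaf{L}_{\psi(2hx)}$, and $\sheaf{L}_{\psi(2hx)}$ is non-trivial on $I(\infty)$; so the ``twist-invariance of $\sheaf{F}|_{I(\infty)}$'' on which the rest of your argument rests is simply not available, and the subsequent steps (the ``classification of wild representations'' plus a ``counting argument'' producing a $G$-orbit of size $p$ in the Swan data) are not yet arguments. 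Your fallback in the case where all $P_h$ are constant is also wrong: then $\sheaf{F}$ is geometrically invariant under translation by $G$, and such sheaves are exactly the pullbacks $\phi^*\sheaf{F}_2$ along the degree-$p$ quotient $\phi\colon \Aa^1\to\Aa^1/G$; when $\sheaf{F}_2$ is not a character this is not a contradiction but precisely the source of the second alternative $\swan_{\infty}(\sheaf{F})\geq p+\rank(\sheaf{F})$.

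For comparison, the paper's proof of (2) is global, not local at $\infty$: since $d<p-1$ one solves $Q(X+h)-Q(X)=P_h$ with $\deg Q\leq d+1$ for a generator $h$ of $G$, so that $\sheaf{F}_1=\sheaf{F}\otimes\sheaf{L}_{\psi(Q)}$ is genuinely $G$-invariant and descends as $\sheaf{F}_1\simeq\phi^*\sheaf{F}_2$; the invariance of Swan conductors under pushforward of virtual representations of degree $0$ then gives $\swan_{\infty}(\sheaf{F}_1)=\sum_{\eta}\bigl(\swan_{\infty}(\sheaf{F}_2\otimes\sheaf{L}_{\eta})-\rank(\sheaf{F}_2)\bigr)+\rank(\sheaf{F}_2)$, the sum running over the $p$ characters $\eta$ of $G$, and part (1) applied to each (irreducible) twist shows that either some term vanishes, forcing $\sheaf{F}$ to be $\sheaf{L}_{\psi(aX-Q)}$ with degree $\leq d+1$, or every term is $\geq 1$, giving $\swan_{\infty}(\sheaf{F}_1)\geq p+\rank(\sheaf{F})$. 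None of this mechanism (descent along $\Aa^1\to\Aa^1/G$ and the Swan--pushforward formula) appears in your sketch, and I do not see how to reach the exact bound $p+\rank(\sheaf{F})$ by break bookkeeping at $\infty$ alone. Finally, in part (1), your assertion that every break at $\infty$ is $\geq 1$ ``because the sheaf is not tame'' is a non sequitur (wildness only gives some positive break); the inequality comes from $\dim H^1_c(\Aa^1\times\bar k,\sheaf{F})=\swan_{\infty}(\sheaf{F})-\rank(\sheaf{F})\geq 0$, and the equality case is not a consequence of a structure theory of break-$1$ characters (irreducible $I(\infty)$-representations with unique break $1$ and rank $>1$ exist): one needs $I(\infty)$-irreducibility and Katz's break-lowering lemma combined with lisseness on $\Aa^1$, as in the paper.
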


\begin{proof}
  (1) Since the geometric fundamental group of $\Aa^1$ is
  topologically generated by the inertia subgroups and $\sheaf{F}$ is
  lisse on $\Aa^1$, we see first that $\sheaf{F}$ is irreducible as
  representation of the inertia group $I(\infty)$ at $\infty$.
\par
Since $\sheaf{F}$ is lisse on $\Aa^1$ and non-trivial, we have
$H^0_c(\Aa^1,\sheaf{F})=H^2_c(\Aa^1,\sheaf{F})=0$, and by the
Euler-Poincar\'e formula, we get
$$
\dim H^1_c(\Aa^1,\sheaf{F})=-\chi_c(\Aa^1,\sheaf{F})
=\swan_{\infty}(\sheaf{F})-\rank(\sheaf{F}),
$$
since the Euler-Poincar\'e characteristic of $\Aa^1$ is $1$. Now, the
left-hand side is a non-negative integer, and we therefore deduce
$$
\swan_{\infty}(\sheaf{F})\geq \rank(\sheaf{F}),
$$
which is the first claim.
\par
Now suppose there is equality. Since $\sheaf{F}$ is irreducible as an
$I(\infty)$ representation, it has a unique break $\lambda$ at
$\infty$ such that
$\swan_{\infty}(\sheaf{F})=\lambda\rank(\sheaf{F})$. We therefore have
equality if and only if $\lambda=1$. 
\par
We can now apply the ``break-lowering lemma''
in~\cite[Th. 8.5.7]{katz-gkm} (it is applicable because $\sheaf{F}$ is
already $I(\infty)$-irreducible). This shows that there exists a
non-trivial additive $\ell$-adic character $\psi$ of $k$ such that
$\sheaf{G}=\sheaf{F}\otimes\sheaf{L}_{\psi(X)}$ has all breaks
$<1$. But $\sheaf{G}$ is lisse on $\Aa^1$ and still irreducible as
$I(\infty)$ representation. We claim that $\sheaf{G}$ is
(geometrically) trivial. Indeed, otherwise the inequality above would
be applicable to $\sheaf{G}$ and would give
$$
\rank(\sheaf{F})=\rank(\sheaf{G})\leq
\swan_{\infty}(\sheaf{G})<\swan_{\infty}(\sheaf{F}),
$$
which is a contradiction. Hence $\sheaf{G}$ is geometrically trivial,
and we get a geometric isomorphism $\sheaf{F}\simeq
\sheaf{L}_{\bar{\psi}}$. 
\par
(An alternative proof of the equality case goes as follows: if
$\sheaf{F}$ were not of this form, it would be a Fourier sheaf in the
sense of~\cite[\S 7.3.5]{katz-esde}; since it is lisse on $\Aa^1$ with
all breaks at $\infty$ larger than $1$, denoting by $\sheaf{G}$ its
Fourier transform, the latter would be lisse at $0$ by~\cite[Lemma
7.3.9 (3)]{katz-esde} and we would get
$$
\swan_{\infty}(\sheaf{F})=\rank(\sheaf{F})+\rank(\sheaf{G})>\rank(\sheaf{F}),
$$
by~\cite[Lemma 7.3.9, (2)]{katz-esde}, since $\sheaf{G}$ is also
irreducible by~\cite[Th. 7.3.8 (3)]{katz-esde}, hence has non-zero
rank.)
\par
(2) The finite subgroup $G\subset \bar{k}$ is cyclic, hence generated
by some $0\not=h\in \bar{k}$. Since $d<p-1$, we can find a polynomial
$Q\in \bar{k}[X]$ of degree $\leq d+1$ such that
$$
Q(X+h)-Q(X)=P_h.
$$
\par
We now form the sheaf $\sheaf{F}_1=\sheaf{F}\otimes
\sheaf{L}_{\psi(Q)}$. It is lisse on $\Aa^1$, and we have
$$
[+x]^*\sheaf{F}_1\simeq \sheaf{F}_1
$$
for any $x\in G=\Fp_ph$. Denoting 
$$
\phi\,:\, \Aa^1\lra \Aa^1/G\simeq \Aa^1
$$ 
the quotient map for the action of $G$ on $\Aa^1$, the fact that $G$
is cyclic of order $p$ implies that there exists a sheaf $\sheaf{F}_2$
on $\Aa^1/G$ such that
$$
\sheaf{F}_1\simeq \phi^*(\sheaf{F}_2).
$$
\par
We then use the invariance of Swan conductors under pushforward for
virtual representations of degree $0$ (see references in~\cite[p. 286,
line 3]{katz-mm}), i.e., the formula
$$
\swan_{\infty}(\phi^*\sheaf{F}_2-\rank(\sheaf{F}_2)\bar{\Qq}_{\ell})=
\swan_{\infty}(\phi_*(\phi^*\sheaf{F}_2-\rank(\sheaf{F}_2)\bar{\Qq}_{\ell})),
$$
where $-$ refers to the Grothendieck ring of lisse sheaves on
$\Aa^1$. The left-hand side is equal to
$$
\swan_{\infty}(\phi^*\sheaf{F}_2)=\swan_{\infty}(\sheaf{F}_1),
$$
while the right-hand side is equal to
\begin{align}
  \swan_{\infty}(\phi_*(\phi^*\sheaf{F}_2-\rank(\sheaf{F}_2)\bar{\Qq}_{\ell}))
  &= \sum_{\eta\in
    \hat{G}}{(\swan_{\infty}(\sheaf{F}_2\otimes\sheaf{L}_{\eta})
    -\rank(\sheaf{F}_2)\swan_{\infty}(\sheaf{L}_{\eta})
    )}\nonumber\\
  &= \sum_{\eta\in
    \hat{G}}{(\swan_{\infty}(\sheaf{F}_2\otimes\sheaf{L}_{\eta})
    -\rank(\sheaf{F}_2))}+\rank(\sheaf{F}_2),
\label{eq-induc-restr}
\end{align}
where $\eta$ runs over $\ell$-adic characters of $G$, and the
$\sheaf{L}_{\eta}$ are the corresponding lisse sheaves on
$\Aa^1$.
% Since $H_1\subset k$, these form a subset of the Artin-Schreier
% sheaves $\sheaf{L}_{\psi}$ associated to additive characters of $k$.
\par
Since $\sheaf{F}$ is irreducible, so is $\sheaf{F}_2$, and the twists
$\sheaf{F}_2\otimes\sheaf{L}_{\eta}$ in the sum. If one term in this
sum is zero, we get
$$
\swan_{\infty}(\sheaf{F}_2\otimes
\sheaf{L}_{\eta})=\rank(\sheaf{F}_2\otimes\sheaf{L}_{\eta}) 
$$
and therefore, by the equality case of (1), we have
$$
\sheaf{F}_2\simeq \sheaf{L}_{\bar{\eta}}\otimes \sheaf{L}_{\psi'}
$$
for some additive character $\psi'$. Pulling back under $\phi$, it
follows that $\sheaf{F}_1$ is also an Artin-Scheier sheaf
$\sheaf{L}_{\psi(aX)}$ for some $a$, and hence
$$
\sheaf{F}\simeq \sheaf{L}_{\psi(aX)}\otimes\sheaf{L}_{\psi(-Q)}
$$
in that case. 
\par
On the other hand, if none of the terms in the sum vanishes, we get
$$
\swan_{\infty}(\sheaf{F}_1)=\swan_{\infty}(\phi^*\sheaf{F}_2)
\geq p+\rank(\sheaf{F}_1).
$$
\par
In particular, by assumption, this is $>d$, and hence
$$
\swan_{\infty}(\sheaf{F})=
\swan_{\infty}(\sheaf{F}_1\otimes\sheaf{L}_{\psi(-Q)})=
\swan_{\infty}(\sheaf{F}_1) \geq p+\rank(\sheaf{F}).
$$

\end{proof}

The final lemma gives an upper-bound for the conductor of
$\xi_h(\sheaf{F})$.

\begin{lemma}\label{lm-cond-xih}
  Let $\sheaf{F}$ be a middle-extension sheaf on $\Aa^1/\bar{\Fp}_p$
  and $h\in \bar{\Fp}_p$ such that the set of singularities of
  $\sheaf{F}$ and $[+h]^*\sheaf{F}$ in $\Aa^1$ are distinct. Then the
  conductor of $\xi_h(\sheaf{F})$ satisfies
$$
\cond(\xi_h(\sheaf{F}))\leq 5\cond(\sheaf{F})^2.
$$
\end{lemma}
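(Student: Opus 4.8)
The plan is to estimate separately the three ingredients of the conductor of $\xi_h(\sheaf{F})=[+h]^*\sheaf{F}\otimes\dual(\sheaf{F})$ --- its rank, the size of its singular locus, and its Swan conductors --- and then to combine them.

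First I would record the elementary invariances. The middle-extension dual $\dual(\sheaf{F})$ is formed from the contragredient representation, so it has the same rank, the same singular locus, and the same breaks (hence the same Swan conductors) as $\sheaf{F}$. Translation by $h$ is an automorphism of $\Aa^1$ which extends to an automorphism of $\Pp^1$ fixing $\infty$ and inducing an automorphism of the henselian local ring, hence of the inertia group, at $\infty$; therefore $[+h]^*\sheaf{F}$ has rank $\rank(\sheaf{F})$, singular locus $\sing(\sheaf{F})-h$ (with the convention $\infty-h=\infty$), and $\swan_x([+h]^*\sheaf{F})=\swan_{x+h}(\sheaf{F})$ for every $x\in\Pp^1$. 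Consequently $\rank(\xi_h(\sheaf{F}))=\rank(\sheaf{F})^2\le\cond(\sheaf{F})^2$, while $\sing(\xi_h(\sheaf{F}))\subseteq\sing(\sheaf{F})\cup(\sing(\sheaf{F})-h)$; by the hypothesis on the two singular loci in $\Aa^1$ these sets overlap at most at $\infty$, so this is a set of size at most $2|\sing(\sheaf{F})|\le 2\cond(\sheaf{F})$.

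Next I would bound the Swan conductors pointwise via the standard tensor-product inequality $\swan_x(\sheaf{A}\otimes\sheaf{B})\le\rank(\sheaf{B})\swan_x(\sheaf{A})+\rank(\sheaf{A})\swan_x(\sheaf{B})$, which follows because, after decomposing $\sheaf{A}$ and $\sheaf{B}$ into break-components at $x$, each piece $\sheaf{A}(\lambda)\otimes\sheaf{B}(\mu)$ has all breaks $\le\max(\lambda,\mu)\le\lambda+\mu$. Applying this with $\sheaf{A}=[+h]^*\sheaf{F}$ and $\sheaf{B}=\dual(\sheaf{F})$ gives, for all $x\in\Pp^1$,
$$\swan_x(\xi_h(\sheaf{F}))\le\rank(\sheaf{F})\bigl(\swan_{x+h}(\sheaf{F})+\swan_x(\sheaf{F})\bigr).$$
Since Swan conductors are non-negative integers, it follows that for $x\in\sing(\xi_h(\sheaf{F}))$ one has $\max(1,\swan_x(\xi_h(\sheaf{F})))\le\rank(\sheaf{F})\max(1,\swan_{x+h}(\sheaf{F}))+\rank(\sheaf{F})\max(1,\swan_x(\sheaf{F}))$: if both Swan conductors on the right vanish the left side equals $1$ and the right side is $2\rank(\sheaf{F})\ge 1$, and otherwise the left side is at most $\rank(\sheaf{F})$ times a positive integer, which is dominated by the displayed right-hand side.

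Finally I would assemble the conductor: summing the last inequality over $x\in\sing(\xi_h(\sheaf{F}))\subseteq\sing(\sheaf{F})\cup(\sing(\sheaf{F})-h)$ and using both $\sum_{y\in\sing(\sheaf{F})}\max(1,\swan_y(\sheaf{F}))=\cond(\sheaf{F})-\rank(\sheaf{F})$ and $|\sing(\sheaf{F})|\le\cond(\sheaf{F})-\rank(\sheaf{F})$ (each term of the conductor sum being $\ge 1$) shows that the Swan part of $\cond(\xi_h(\sheaf{F}))$ is at most $4\rank(\sheaf{F})\cond(\sheaf{F})$; adding the rank term $\rank(\sheaf{F})^2$ and using $\rank(\sheaf{F})\le\cond(\sheaf{F})$ gives
$$\cond(\xi_h(\sheaf{F}))\le\rank(\sheaf{F})^2+4\rank(\sheaf{F})\cond(\sheaf{F})\le 5\cond(\sheaf{F})^2,$$
as claimed. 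I do not expect any serious obstacle here; the only steps requiring a moment's care are the tensor-product break estimate and its validity at $\infty$, and the bookkeeping that distinguishes, among the singularities of $\xi_h(\sheaf{F})$, the points that are lisse or merely tamely ramified for one of the two factors from the genuinely wild ones. (A sharper count, using the disjointness of the two singular loci in $\Aa^1$ more fully, in fact yields the constant $2$ in place of $5$, but the stated bound is all that is needed.)
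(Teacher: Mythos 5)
Your proof is correct and follows essentially the same route as the paper: bound the rank by $\rank(\sheaf{F})^2$, count the singularities of $\xi_h(\sheaf{F})$ inside $\sing(\sheaf{F})\cup(\sing(\sheaf{F})-h)$, and control the wild ramification by the tensor-product Swan inequality $\swan_x(\sheaf{A}\otimes\sheaf{B})\le\rank(\sheaf{B})\swan_x(\sheaf{A})+\rank(\sheaf{A})\swan_x(\sheaf{B})$, which the paper quotes from Esnault--Kerz and applies at $\infty$ (using disjointness to see that at finite singularities only one factor is ramified), whereas you prove it via the break decomposition and apply it uniformly at every point. The only extra care in your version, the $\max(1,\cdot)$ bookkeeping, is handled correctly, so the argument goes through as written.
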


\begin{proof}
Indeed, since the singularities are disjoint, we have
$$
\cond(\xi_h(\sheaf{F})\leq
\rank(\sheaf{F})^2+2\rank(\sheaf{F})\sum_{x\in
  S}{\swan_x(\sheaf{F})} +\swan_{\infty}(\xi_h(\sheaf{F})).
$$
\par
But from known properties of Swan
conductors~\cite[(3.2)]{esnault-kerz}, we have
$$
\swan_{\infty}(\xi_h(\sheaf{F})) \leq
\rank(\sheaf{F})\swan_{\infty}([+h]^*\sheaf{F})+
\rank(\sheaf{F})\swan_{\infty}(\dual(\sheaf{F})) \leq
2\cond(\sheaf{F})^2,
$$
hence the result.
% (since the singularities of $[+h]^*\sheaf{F}_i$ and
% $\dual(\sheaf{F}_i)$ are disjoint) and the last term is $\leq
% \rank(\sheaf{F})^2\lambda$, where $\lambda$ is the largest break of
% $\sheaf{F}$ at infinity, so that
% $$
% \swan_{\infty}(\xi_h(\sheaf{F}_i))\leq
% \rank(\sheaf{F})\cond(\sheaf{F})
% \leq \cond(\sheaf{F})^2.
% $$
\end{proof}

We are now able to conclude the inductive proof of the inverse
theorem. The reader is encouraged to check the tame case, for an
arbitrary finite field and with no assumption on $d$ compared with the
characteristic $p$.

\begin{proof}[Proof of Proposition~\ref{pr-induction}]
  We start with the data for a case of \textbf{Inverse($d+1$)}: $p$ is
  a prime number $>d+1$, $\sheaf{F}$ is a middle-extension sheaf of
  weight $0$ on $\Aa^1/\Fp_p$ which is pointwise pure of weight $0$
  and arithmetically semisimple.
  %% , and we assume $\swan_{\infty}(\sheaf{F})<p$.
  % We assume that, geometrically, $\sheaf{F}$ does not have a summand
  % lisse on $\Aa^1$ with Swan conductor $\geq p$, and 
  We will show that one of the two conditions in
  \textbf{Inverse($d+1$)} holds. For notational simplicity, we write
  $c=\cond(\sheaf{F})$ and $S=\sing(\sheaf{F})\cap \Aa^1$.
\par
Let $U/\Fp_p$ be the complement of the singularities $S$ of
$\sheaf{F}$ in $\Aa^1/\Fp_p$, so that $\sheaf{F}$ is lisse on
$U/\Fp_p$. Let
$$
\sheaf{F}=\bigoplus_{1\leq i\leq r}{\sheaf{F}_i}
$$
be a decomposition of $\sheaf{F}$ into direct sum of arithmetically
irreducible middle-extension sheaves. Note that $r\leq
\rank(\sheaf{F})\leq c$ and each $\sheaf{F}_i$ also has conductor
$\leq c$, and hence we have
\begin{equation}\label{eq-bound-1}
  U_{d+1}(\sheaf{F};\Fp_p)\leq c^{2^{d+1}}\sum_{i=1}^r{U_{d+1}(\sheaf{F}_i;\Fp_p)}.
\end{equation}
\par
We now consider a fixed $i$, and the arithmetically irreducible sheaf
$\sheaf{F}_i$. If $\sheaf{F}_i$ is induced, its trace function is zero
on $U$, and is bounded by $\rank(\sheaf{F}_i)\leq c$ on the
complement, which contains at most $c$ points, so that a trivial
estimate gives
\begin{equation}\label{eq-induced}
U_{d+1}(\sheaf{F}_i;\Fp_p)\leq \frac{c^{1+2^{d+1}}}{p}.
\end{equation}
\par
Now we assume that $\sheaf{F}_i$ is not induced. By
Lemma~\ref{lm-sheaf-rec}, noting that the singularities of
$\sheaf{F}_i$ are among those of $\sheaf{F}$, we have the inductive
formula
\begin{equation}\label{eq-inductive}
U_{d+1}(\sheaf{F}_i;\Fp_p)= \frac{1}{p} \sum_{h\in
  \Fp_p-E}{U_d(\xi_h(\sheaf{F}_i);\Fp_p)}+\theta
\frac{c^{2+2^{d+1}}}{p}
\end{equation}
with $|\theta|\leq 1$, where
$$
E=\{h\in \Fp_p^{\times}\,\mid\, S\cap (S-h)\not=\emptyset\}.
$$
\par
Each term in the sum can be trivially bounded by
\begin{equation}\label{eq-bound-2}
  \frac{1}{p}U_d(\xi_h(\sheaf{F}_i);\Fp_p)\leq
  \rank(\xi_h(\sheaf{F}_i))^{2^d}p^{-1}
  =\rank(\sheaf{F}_i)^{2\cdot 2^d}p^{-1}
  \leq c^{2^{d+1}}p^{-1}
\end{equation}
(which we can therefore use for some exceptional $h$, provided their
number is not too large in terms of $c$).
% \par
% In particular, we see that we can freely use a finite number of such
% trivial bounds (where the number is bounded in terms of the conductor
% of $\sheaf{F}$) to obtain the estimates on $U_{d+1}(\sheaf{F}_i;k)$ of
% order of magnitude $|k|^{-1}$.
% \par
% Before applying the induction, we must take care of the fact that
% $\xi_h(\sheaf{F}_i)$ might not be a middle-extension sheaf. However,
% it \emph{is} one if
% $$
% \sing([+h]^*\sheaf{F}_i)\cap \sing(\dual(\sheaf{F}_i))=\emptyset
% $$ 
% is empty since in that case the singularities of $[+h]^*\sheaf{F}_i$ and
% $\dual(\sheaf{F}_i)$ are disjoint. Note that
% $$
% \sing([+h]^*\sheaf{F}_i)\cap \sing(\dual(\sheaf{F}_i))
% \subset S\cap (S-h).
% $$
% \par
% %% REF....
% Thus we introduce
% $$
% E=\{h\in k^{\times}\,\mid\, S\cap (S-h)\not=\emptyset\},
% $$
% and observe that $|E|\leq |S|(|S|-1)\leq c(c-1)$ (since any $h$ is a
% difference of two distinct singularities). We will
% use~(\ref{eq-bound-2}) for $h\in E$.
\par
Furthermore, we know that for each $h\in \Fp_p^{\times}-E$, the sheaf
$\xi_h(\sheaf{F}_i)$ is a middle-extension sheaf, lisse on $U_h=U\cap
(U-h)$ and pointwise pure of weight $0$. By Lemma~\ref{lm-cond-xih},
its conductor is $\leq 5c^2$.
\par
We can therefore apply the induction assumption
\textbf{Inverse($d$)}. We obtain the bound
\begin{equation}\label{eq-apply-induction}
U_{d}(\xi_h(\sheaf{F}_i);\Fp_p)\leq
(5\cond(\xi_h\sheaf{F}_i))^{(d+1)2^d}p^{-1} \leq (5c)^{(d+1)2^{d+1}}p^{-1},
\end{equation}
for all those $h\in \Fp_p^{\times}-E$ such that there does \emph{not}
exist some $P_h\in \Fp_p[X]$ with $\deg(P_h)\leq d-1$ with a geometric
embedding
$$
\sheaf{L}_{\psi(P)}\injecte
\xi_h(\sheaf{F}_i)=[+h]^*\sheaf{F}_i\otimes \dual(\sheaf{F}_i).
$$
\par
We denote by $F_i\subset \Fp_p-E$ the set of exceptional $h$ for which
this last property holds (including $h=0$). By
Lemma~\ref{lm-isotypic}, (2), if $h\in F_i$, we have a geometric
isomorphism
$$
[+h]^*\sheaf{F}_i\simeq \sheaf{F}_i\otimes \sheaf{L}_{\psi(P_h)}
$$
for some polynomial $P_h$ of degree $\leq d-1$, and hence
\begin{multline*}
  F_i\subset G=\{h\in \Fp_p\,\mid\, [+h]^*\sheaf{F}_i\text{ is
    geometrically isomorphic to } \\ \sheaf{F}_i\otimes
  \sheaf{L}_{\psi(P)}\text{ for some $P$ of degree $\leq d-1$}\}.
\end{multline*}
\par
This subset $G$ is an additive subgroup of $\Fp_p$, hence either
trivial or equal to $\Fp_p$. In the former case, we are
done. Otherwise, we first note that if $\sheaf{F}_i$ has a singularity
$a\in \Aa^1$, all elements in its orbit under the action of $G$ are
also singularities, i.e., $|G|\leq c$. We can apply~(\ref{eq-bound-2})
for all $h\in G$, getting a contribution
\begin{equation}\label{eq-bound-3}
\leq |G|c^{2^{d+1}}p^{-1}\leq c^{1+2^{d+1}}p^{-1}
\end{equation}
for these terms.
\par
The other possibility is that $\sheaf{F}_i$ is lisse on $\Aa^1$. We
can then apply Lemma~\ref{lm-peel}, (2) (to the geometrically
irreducible component of the arithmetically irreducible but
non-induced sheaf $\sheaf{F}_i$)) and two possibilites arise: either
$\sheaf{F}_i$ is geometrically isomorphic to a direct sum of copies of
$\sheaf{L}_{\psi(Q)}$ for some polynomial $Q$ of degree $\leq d$, or
otherwise we have
$$
c\geq \swan_{\infty}(\sheaf{F}_i)\geq |G|=p,
$$
in which case we also get the bound~(\ref{eq-bound-3}) for this
contribution.
\par
Combining~(\ref{eq-bound-1}),~(\ref{eq-induced}),~(\ref{eq-bound-3})
and the average of the inductive bounds~(\ref{eq-apply-induction}), we
get
$$
  U_{d+1}(\sheaf{F};\Fp_p)\leq Ap^{-1},
$$
where
$$
A=c^{2^{d+1}}\left\{
c^{1+2^{d+1}}+c^{2+2^{d+1}}+(5c)^{(d+1)2^{d+1}}
\right\},
$$
and in order to finish the induction, we must check that $A\leq
(5c)^{(d+2)2^{d+1}}$, for $d\geq 1$, which is easily done, e.g., using
the bound
$$
c^{2^{d+1}}\times \Bigl\{c^{1+2^{d+1}}+c^{2+2^{d+1}}
\Bigr\} \leq 2c^{(d+2)2^{d+1}}.
$$
\end{proof}

\end{document}